\theoremstyle{plain}
\newtheorem{theorem}{Theorem}[section]
\newtheorem{proposition}[theorem]{Proposition}
\newtheorem{lemma}[theorem]{Lemma}
\newtheorem{corollary}[theorem]{Corollary}
\theoremstyle{definition}
\newtheorem{definition}[theorem]{Definition}
\theoremstyle{remark}
\newtheorem{remark}[theorem]{Remark}
\newcommand{\C}{\mathbb{C}}
\newcommand{\dd}{\,\mathrm{d}}
\newcommand{\norm}[1]{\left\|#1\right\|}
\newcommand{\abs}[1]{\left|#1\right|}
\newcommand{\Rea}{\operatorname{Re}}
\newcommand{\Ima}{\operatorname{Im}}
\par\end{flushleft}}
\title{\bf Unbounded symbols, heat flow, and Toeplitz operators}
\author{{\bf Sam Looi}}
\date{\small\today}
\begin{document}
\maketitle

\begin{abstract}
    We disprove the natural domain extension of the Berger--Coburn heat-flow conjecture for Toeplitz operators on the Bargmann space and identify the failure mechanism as a gap between pointwise and uniform control of a Gaussian averaging of the squared modulus of the symbol, a gap that is invisible to the linear form $T_g$. We establish that the form-defined operator $T_g$ and the natural-domain operator $U_g$ decouple in the unbounded symbols regime: while $T_g$ is governed by linear averaging, $U_g$ is controlled by the quadratic intensity of $|g|^2$. We construct a smooth, nonnegative radial symbol $g$ satisfying the coherent-state admissibility hypothesis with bounded heat transforms for all time $t>0$; for this symbol, $T_g$ is bounded, yet $U_g$ is unbounded. This is a strictly global phenomenon: under the coherent-state hypothesis, local singularities are insufficient to cause unboundedness, leaving the ``geometry at infinity'' as the sole obstruction. Boundedness of $U_g$ is equivalent to the condition that $|g|^2 d\mu$ is a Fock--Carleson measure, a condition strictly stronger than the linear average $g d\mu$ governing $T_g$. Finally, regarding the gap between the known sub-critical sufficiency condition and the critical heat time, we prove that heat-flow regularity is irreversible in this context and show that bootstrapping strategies cannot resolve the gap between sufficiency and critical time.
\end{abstract}

\section{Introduction}
We work with the normalization
\begin{equation}\label{eq:norm}
K(z,w)=e^{z\overline w},
\qquad
\mathrm{d}\mu(z)=\pi^{-1}e^{-|z|^2}\,\mathrm{d}A(z),
\end{equation}
with $\mathrm{d}A$ denoting planar Lebesgue measure, and write $F^2(\mathbb{C})$ for the corresponding  Bargmann space, or Bargmann--Fock space 
\[
F^2(\mathbb{C})
=\Big\{f \text{ entire}:\ \|f\|_{F^2}^2:=\int_{\mathbb{C}}|f(z)|^2\,\mathrm{d}\mu(z)<\infty\Big\}.
\]
Let $P:L^2(\mu)\to F^2(\mathbb{C})$ denote the orthogonal projection.

A measurable symbol $g$ generates two distinct operator realizations. Let $g:\mathbb C\to\mathbb C$ be measurable and set
\[
\mathcal D(\mathfrak t_g):=\{f\in F^2(\mathbb C):\, |g|^{1/2}f\in L^2(\mu)\}.
\]
For $f,h\in\mathcal D(\mathfrak t_g)$ define
\[
\mathfrak t_g(f,h):=\int_{\mathbb C} g(z)\,f(z)\,\overline{h(z)}\,d\mu(z),
\]
which is well-defined by Cauchy--Schwarz. If $\mathfrak t_g$ extends to a bounded form on
$F^2(\mathbb C)\times F^2(\mathbb C)$, then there exists a unique bounded operator
$T_g:F^2(\mathbb C)\to F^2(\mathbb C)$ such that
$\langle T_g f,h\rangle=\mathfrak t_g(f,h)$ for all $f,h\in F^2(\mathbb C)$.
Second, let $M_g$ denote multiplication by $g$ on $L^2(\mu)$ with domain
\begin{equation}\label{eq:Ug-def}
\mathcal D(M_g):=\{f\in L^2(\mu): \, gf \in L^2(\mu)\}.
\end{equation}
We consider the maximal operator $U_g$, which is the Toeplitz operator defined on its natural domain $\mathcal{D}(M_g)$,
\[
U_g:\mathcal D(M_g)\subset L^2(\mu)\to F^2(\mathbb C),
\qquad
U_g f:=P(M_g f)=P(gf).
\]
Whenever the form $\mathfrak t_g$ extends to a bounded form on $F^2\times F^2$ (and hence induces a bounded operator
$T_g$), one has
\[
U_g f=T_g f\qquad (\text{for } f\in F^2\cap \mathcal D(M_g)).
\]
In particular, the restriction of $U_g$ to $F^2\cap\mathcal D(M_g)$ is the compression of $M_g$ to the subspace $F^2$. However, the behavior of these two realizations diverges for unbounded symbols. We exploit this gap to construct the counterexample in Theorem~\ref{thm:main}, building a symbol that remains ``small'' in the linear sense governing $T_g$ but becomes ``large'' in the quadratic sense that breaks $U_g$. Theorem~\ref{thm:carleson} provides the structural explanation for this phenomenon, proving that the boundedness of $U_g$ is equivalent to $|g|^2 d\mu$ being a Fock-Carleson measure (defined in Definition~\ref{def:Fock-Carleson measure}), which is a condition strictly stronger than the linear averages governing $T_g$. This rules out any naive strategy that attempts to deduce properties of the Toeplitz operator $T_g$ by studying $U_g$, as we show the two are no longer equivalent in this regime.

\paragraph{Motivation: the Berger--Coburn heat-flow conjecture.}
A major open problem in the analysis of Toeplitz operators on the  Bargmann space is the
Berger--Coburn conjecture\footnote{We work with the normalization $d\mu(z) = \pi^{-1}e^{-|z|^2}dA(z)$, which differs from the Berger-Coburn \cite{BergerCoburn1994HeatFlow} normalization $d\mu_{BC}(z) \sim e^{-|z|^2/2}$. So, the Berezin transform corresponds to the heat operator at time $t=1/4$ in our setting, whereas it corresponds to $t=1/2$ in \cite{BergerCoburn1994HeatFlow}. The original Berger-Coburn conjecture would correspond to $t=1/8$ in our units.} \cite{BergerCoburn1994HeatFlow}, which, roughly speaking, proposes that
heat-flow regularity at a critical time captures boundedness of Toeplitz operators for unbounded symbols.

Let $k_a(z)=e^{z\overline a-|a|^2/2}$ denote the normalized coherent-state.
Berger and Coburn impose the standing domain hypothesis that
\begin{equation}\label{eq:BC-domain}
k_a\in \mathrm{Dom}(M_g)\ \text{ for every }a\in\mathbb C,
\quad\text{i.e.}\quad gk_a\in L^2(\mu)\ \text{ for every }a\in\mathbb C.
\end{equation}
In our normalization $|k_a(z)|^2\,d\mu(z)=\pi^{-1}e^{-|z-a|^2}\,\mathrm{d}A(z)$, hence \eqref{eq:BC-domain}
is equivalent to the pointwise finiteness of the translated Gaussian $L^2$ weights
\begin{equation}\label{eq:BC-window-pointwise}
\frac{1}{\pi}\int_{\mathbb C} e^{-|z-a|^2}\,|g(z)|^2\,\mathrm{d}A(z)<\infty
\qquad (a\in\mathbb C).
\end{equation}

The Berger--Coburn conjecture proposes that the heat transform at $t=1/8$ is bounded if and only if $T_g$ is a bounded operator on $F^2$. In the context of the natural domain operator $U_g$, one might hope that such regularity, or even regularity for \emph{all} positive time, would be sufficient for boundedness.
\begin{quote}
\textbf{(False analogue for the natural domain.)}
Assume $gk_a\in L^2(\mu)$ for all $a\in\mathbb{C}$ and $g^{(t)}\in L^\infty(\mathbb C)$ for all $t>0$.
Then the Toeplitz operator on the natural domain $U_g=P(g\cdot)$ extends boundedly $L^2(\mu)\to F^2$.
\end{quote}
We disprove this claim in a strong sense. We identify an obstruction to the natural domain extension of the Berger--Coburn conjecture. While the original conjecture proposes that heat-flow regularity controls the Toeplitz form $T_g$, it is natural to ask whether such regularity also controls $U_g$ on its natural domain. We prove that these two problems decouple in the unbounded regime.

Our construction and results (Theorem~\ref{thm:main}, Theorem~\ref{thm:carleson})  quantitatively characterize how, under the coherent-state domain hypothesis, $U_g$ is a strictly more singular object than $T_g$, and that standard heat-flow conditions are insufficient to control the operator on its natural domain, even when they successfully control the form. We point out (Corollary \ref{cor:global}) the similarity between the obstruction to the natural domain operator's boundedness and the coherent-state domain hypothesis. As a corollary of our results, we reprove Theorem 13 in \cite{BergerCoburn1994HeatFlow}.

This decoupling is strictly a global phenomenon. Under the coherent-state domain hypothesis \eqref{eq:BC-domain}, local singularities are effectively ruled out as obstructions. For compactly supported symbols, the domain hypothesis is equivalent to the boundedness of $U_g$. So any counterexample to the above false analogue for the natural domain must exploit the ``geometry at infinity.'' 

Given the known sufficiency condition for the Berger--Coburn conjecture (Theorem 12 in \cite{BergerCoburn1994HeatFlow}), a natural question is whether, under the hypothesis \eqref{eq:BC-domain}, bootstrapping heat regularity from time $1/8$ in our normalization to a single value $t_1 < 1/8$ is possible. 
We prove a more general result in our appendix that boundedness at $t_0$ fails to imply boundedness at $t_1 < t_0$ within the coherent-state hypothesis. Thus, the all-time regularity of our counterexample is a strictly stronger condition than the critical-time hypothesis. Aside from being of independent interest, we now know that any such bootstrapping attempt for the conjecture must rely on structural assumptions to exclude the class of counterexamples constructed in the Apppendix.

Theorem~\ref{thm:main} explicitly instantiates this separation phenomenon. We construct an explicit non-smooth radial symbol $g\ge 0$, as well as a smooth version with otherwise similar properties, satisfying the properties in the main theorem.

\begin{theorem}[Main separation phenomenon]\label{thm:main}
There exists a smooth ($C^\infty$), nonnegative radial symbol $g\ge 0$ on $\C$ such that:
\begin{enumerate}
\item $gk_a\in L^2(\mu)$ for every $a\in\C$ (coherent-state domain hypothesis).
\item The heat transform $g^{(t)}$ belongs to $L^\infty(\C)$ for every $t>0$.
\item The Toeplitz form $\mathfrak t_g$ is bounded on $F^2(\C)\times F^2(\C)$, hence $T_g:F^2(\C)\to F^2(\C)$ is bounded.
\item The Toeplitz operator on the natural domain $U_g:\mathcal D\subset L^2(\mu)\to F^2(\C)$ defined in
\eqref{eq:Ug-def} is unbounded.
\end{enumerate}
\end{theorem}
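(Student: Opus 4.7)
My plan is to construct $g$ as a superposition of thin nonnegative radial bumps placed on well-separated annuli, with heights and widths chosen so that each bump carries bounded linear mass (which governs $T_g$ and the heat transforms via the linear averaging highlighted in the introduction) but diverging quadratic mass (which governs the Fock--Carleson condition for $|g|^2\,d\mu$ and hence $U_g$, by Theorem~\ref{thm:carleson}). Concretely, I would take a rapidly growing sequence $r_n\to\infty$ with $r_{n+1}-r_n\to\infty$, choose widths $w_n=1/n$ and heights $h_n=n$, and set $g(z)=\sum_n \phi_n(|z|)$ where $\phi_n$ is a smooth nonnegative radial bump of height of order $h_n$ supported in a radial annulus of width of order $w_n$ around $|z|=r_n$. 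The bump-by-bump scaling $h_n w_n=1$ versus $h_n^2 w_n=n\to\infty$ is the quantitative core of the separation.

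The verification proceeds in four steps. For the coherent-state hypothesis, I split $\int e^{-|z-a|^2}\,g^2\,dA$ bump-by-bump; the $n$-th contribution is controlled by $C\,h_n^2 w_n\, e^{-c(r_n-|a|)^2}$, which is pointwise finite in $a$ once the $r_n$ separate fast enough. For the heat transform $g^{(t)}$ at fixed $t>0$, I distinguish bumps with width $w_n\ge\sqrt{t}$ (finitely many, handled by the pointwise bound $g\le h_n$) from the tail with $w_n<\sqrt{t}$, where a Gaussian window of scale $\sqrt{t}$ centered near bump $n$ integrates over a strip of radial width $w_n$ and tangential length of order $\sqrt{t}$, yielding a contribution of order $h_n w_n/\sqrt{t}=1/\sqrt{t}$; adding the negligible tail from far bumps gives $\|g^{(t)}\|_\infty\le C/\sqrt{t}<\infty$ for each $t>0$. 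For $T_g$ bounded, since $g\ge 0$ and its Berezin transform $\tilde g=g^{(1/4)}$ is bounded by the previous step, the classical implication (for nonnegative symbols) that $\tilde g\in L^\infty$ forces $T_g$ to be bounded gives property 3. For $U_g$ unbounded, take $a$ at the center of bump $n$ and lower-bound $\int |k_a|^2 g^2\,d\mu$ by the contribution of bump $n$ alone, which is at least a constant times $h_n^2 w_n=n\to\infty$; hence $|g|^2\,d\mu$ is not Fock--Carleson, and Theorem~\ref{thm:carleson} concludes that $U_g$ is unbounded.

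The main obstacle is the small-$t$ estimate in step 2: once $\sqrt{t}$ drops below the bump width, the heat kernel resolves individual bumps and the pointwise estimate $g\le h_n$ becomes useless, so one must exploit the annular geometry, where tangential averaging along a strip of length of order $\sqrt{t}$ converts the pointwise height $h_n$ into the effective $h_n w_n/\sqrt{t}$. Smoothness is not a serious obstacle: taking each $\phi_n$ to be a standard mollification of a rectangular radial profile at its own scale $w_n$ preserves every estimate above up to absolute constants, and the rapid radial separation ensures that the sum defining $g$ is locally finite and hence $C^\infty$.
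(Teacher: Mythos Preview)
Your proposal is correct and follows the same overall strategy as the paper: a radial symbol built from nonnegative bumps on well-separated annuli, tuned so that the linear mass per bump stays controlled while the quadratic mass diverges, with unboundedness of $U_g$ read off from the supremal kernel test of Theorem~\ref{thm:carleson}. The principal difference lies in the parameter regime. The paper takes ultrathin annuli (radii $a_n=\sqrt{n}$, widths $\rho_n=n^{-9/2}$, heights $d_n=n^{5/2}\log n$) so that $d_n|A_n|\sim(\log n)/n^{3/2}$ is \emph{summable} and hence $g\in L^1(dA)$ globally. This makes (ii) and (iii) immediate one-liners: the heat bound is just $\|g^{(t)}\|_\infty\le(4\pi t)^{-1}\|g\|_{L^1(dA)}$, and the Toeplitz form is bounded by $|\mathfrak t_g(f,h)|\le\pi^{-1}\|g\|_{L^1(dA)}\|f\|_{F^2}\|h\|_{F^2}$ via the pointwise estimate $|f(z)|e^{-|z|^2/2}\le\|f\|_{F^2}$. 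Your choice $h_n w_n=1$ gives $h_n|A_n|\sim r_n\to\infty$, so $g\notin L^1(dA)$, and you are forced into the more delicate tangential-averaging argument for the heat transform and into invoking the Fock--Carleson characterization for $T_g$. That argument is valid (the key computation that a Gaussian window of scale $\sqrt t$ sees an annular strip of area $\sim w_n\sqrt t$, giving contribution $\sim h_n w_n/\sqrt t$, is correct, and the far-bump tail is controlled by the separation $r_{n+1}-r_n\to\infty$), but it buys nothing extra: the paper's $L^1$ shortcut eliminates the case split on $w_n$ versus $\sqrt t$ entirely and makes (ii)--(iii) independent of any annular geometry.
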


\begin{corollary}[Purely global failure of the $|g|^2$ kernel test]\label{cor:global}
For the symbol $g$ in Theorem~\ref{thm:main}, for every fixed $a\in\C$ one has the coherent-state admissibility condition
\[
\int e^{-|z-a|^2}|g(z)|^2\,dA(z)<\infty,
\]
but
\[
\sup_{a\in\C}\int e^{-|z-a|^2}|g(z)|^2\,dA(z)=\infty,
\]
and the divergence occurs along a sequence $a_n$ escaping to infinity. We give several equivalent conditions for the finiteness of the latter condition in Theorem~\ref{thm:carleson}.
\end{corollary}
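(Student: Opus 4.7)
The first assertion is immediate from hypothesis (1) of Theorem~\ref{thm:main}. Writing $\phi(a) := \int_{\C} e^{-|z-a|^2}|g(z)|^2\,dA(z)$, the normalization $|k_a(z)|^2\,d\mu(z) = \pi^{-1}e^{-|z-a|^2}\,dA(z)$ converts $gk_a \in L^2(\mu)$ into pointwise finiteness $\phi(a) < \infty$ for every $a\in\C$. For the infinite-supremum assertion, I would invoke Theorem~\ref{thm:carleson}: boundedness of $U_g$ is equivalent to $|g|^2\,d\mu$ being a Fock--Carleson measure, whose kernel-test form is exactly $\sup_{a}\pi^{-1}\phi(a) < \infty$. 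Since $U_g$ is unbounded by (4) of Theorem~\ref{thm:main}, this gives $\sup_a \phi(a) = \infty$.

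To localize the divergence to a sequence escaping to infinity, I would prove that $\phi$ is continuous on $\C$. Writing $a = u+iv$ and extracting the Gaussian prefactor,
\[
\phi(a) = e^{-u^2-v^2}\,\psi(2u,2v),
\]
where $\psi(\xi,\eta) := \int_{\C} e^{\xi x+\eta y}H(x+iy)\,dx\,dy$ is the two-dimensional bilateral Laplace transform of the nonnegative function $H(z):=e^{-|z|^2}|g(z)|^2$. A direct H\"older inequality applied to $(e^{s_1\cdot x}H)^\lambda(e^{s_2\cdot x}H)^{1-\lambda}$ shows that $\log\psi$ is convex on $\mathbb{R}^2$, hence $\psi$ is convex. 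A convex function finite on all of $\mathbb{R}^2$ is automatically continuous there, so $\phi$ is continuous on $\C$ and, in particular, bounded on every compact set.

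Combining the two observations: $\sup_a \phi(a) = \infty$ forces a sequence $\{a_n\}$ with $\phi(a_n) \to \infty$; if $\{a_n\}$ were bounded, a convergent subsequence $a_{n_k}\to a^*$ would give $\phi(a^*) = \lim_k \phi(a_{n_k}) = \infty$ by continuity, contradicting the pointwise finiteness from part (1). Hence $|a_n|\to\infty$, and since $g$ is radial so is $\phi$, so one may take $a_n$ on the positive real axis. The step I would want to double-check is that the Fock--Carleson characterization in Theorem~\ref{thm:carleson} yields precisely the kernel integral $\int e^{-|z-a|^2}|g|^2\,dA$ in this form (a direct consequence of \eqref{eq:norm}); beyond that, the argument is routine convexity plus continuity, with the convexity of a positive-measure Laplace transform doing all the real work.
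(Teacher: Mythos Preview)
Your argument is correct, but it takes a genuinely different route from the paper's. The paper never proves continuity of $\phi$; instead it simply exhibits the escaping sequence explicitly. With the annuli centers $a_n=\sqrt{n}$ from the construction, the paper restricts the integral to the sector $E_n\subset A_n$ (on which $|z-a_n|\to 0$ and $g\equiv d_n$) and computes directly that
\[
\int_{\C} e^{-|z-a_n|^2}|g(z)|^2\,dA(z)\ \gtrsim\ d_n^2\,|E_n|\ \asymp\ (\log n)^2\ \to\ \infty,
\]
which simultaneously gives $\sup_a\phi(a)=\infty$ and identifies the divergent sequence as $a_n\to\infty$. Your approach, by contrast, is structural: you deduce $\sup_a\phi(a)=\infty$ from Theorem~\ref{thm:carleson} and Theorem~\ref{thm:main}(4), and then use log-convexity of the two-sided Laplace transform of $H(z)=e^{-|z|^2}|g(z)|^2$ to force continuity of $\phi$, so that pointwise finiteness rules out any bounded blow-up sequence. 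This buys you a statement that applies to \emph{any} symbol satisfying hypotheses (1) and (4) of Theorem~\ref{thm:main}, not just the specific annuli construction; the paper's direct computation, on the other hand, avoids the convexity machinery entirely and yields the explicit divergence rate $(\log n)^2$ along the concrete centers $a_n=\sqrt{n}$.
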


We construct $g$ as a smoothened sum of indicators on ultrathin annuli with well-chosen amplitudes. By enforcing radial symmetry and nonnegativity, we ensure the separation is genuinely geometric, ruling out any mechanisms based on phase cancellation or angular localization.

An input to the proof of Theorem~\ref{thm:main} is the following theorem that $U_g$ is bounded if and only if $|g|^2\dd\mu$ is a Fock-Carleson measure. 

\begin{theorem}[Characterization of bounded $U_g$]
\label{thm:carleson}
Let $g$ be measurable and set $\,\mathrm{d}\nu:=|g|^{2}\,\mathrm{d}\mu$. The following are equivalent:
\begin{enumerate}
\item[(i)] $U_g$ extends to a bounded operator $L^{2}(\mu)\to F^{2}$.
\item[(ii)] Multiplication by $g$ is bounded from $F^{2}$ into $L^{2}(\mu)$, i.e.
\[
\int_{\mathbb{C}} |g(z)|^{2}\,|h(z)|^{2}\,\,\mathrm{d}\mu(z)\le C\|h\|_{F^{2}}^{2}
\qquad (h\in F^{2}).
\]
\item[(iii)] $\nu$ is a Carleson measure for $F^{2}$.
\item[(iv)] (Supremal kernel test) With $k_a(w)=e^{w\overline a-|a|^{2}/2}$ one has
\[
\sup_{a\in\mathbb{C}}\int_{\mathbb{C}}|k_a(z)|^{2}\,\,\mathrm{d}\nu(z)
=
\sup_{a\in\mathbb{C}}\frac{1}{\pi}\int_{\mathbb{C}} e^{-|z-a|^{2}}\,|g(z)|^{2}\,\,\mathrm{d}A(z)
<\infty.
\]
\item[(v)] The Toeplitz form
\[
\mathfrak t_{|g|^{2}}(f,h):=\int_{\mathbb{C}} |g(z)|^{2}\,f(z)\,\overline{h(z)}\,\,\mathrm{d}\mu(z),
\qquad f,h\in F^{2},
\]
is bounded on $F^{2}\times F^{2}$ (equivalently, it induces a bounded Toeplitz operator $T_{|g|^{2}}:F^{2}\to F^{2}$).
\end{enumerate}
\end{theorem}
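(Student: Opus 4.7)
The plan is to prove this by a short cycle of elementary implications, reserving only one substantive Fock-space ingredient for the kernel test. I would split the conditions into two groups: $\{$(i), (ii), (iii), (v)$\}$, which are linked by tautology, polarization, and duality; and (iv), which joins the group via the classical kernel characterization of Fock--Carleson measures.

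For the elementary equivalences, first (ii) $\Leftrightarrow$ (iii) is a reading of the definition: $\nu=|g|^{2}d\mu$ is Carleson for $F^{2}$ precisely when the embedding $F^{2}\hookrightarrow L^{2}(\nu)$ is bounded, which is the inequality in (ii). The equivalence (ii) $\Leftrightarrow$ (v) is Cauchy--Schwarz in one direction and the diagonal $f=h$ in the other: set $f=h$ to extract the quadratic bound from the form bound, and use
\[
|\mathfrak t_{|g|^{2}}(f,h)|\le\Bigl(\int|g|^{2}|f|^{2}d\mu\Bigr)^{\!1/2}\!\Bigl(\int|g|^{2}|h|^{2}d\mu\Bigr)^{\!1/2}
\]
to recover the form bound from the quadratic one. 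The equivalence (i) $\Leftrightarrow$ (ii) is duality: for $f\in\mathcal D(M_g)$ and $h\in F^{2}$,
\[
\langle U_g f,h\rangle_{F^{2}}=\langle P(gf),h\rangle=\langle gf,h\rangle_{L^{2}(\mu)}=\langle f,\bar g h\rangle_{L^{2}(\mu)},
\]
so a bounded extension $L^{2}(\mu)\to F^{2}$ of $U_g$ exists iff the conjugate-linear map $h\mapsto \bar g h$ extends boundedly $F^{2}\to L^{2}(\mu)$, which is the statement in (ii). Here I would check that $\mathcal D(M_g)$ is dense in $L^{2}(\mu)$ (it contains $\chi_{\{|g|\le n\}}L^{2}(\mu)$ for every $n$), so that bounded restrictions extend uniquely.

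The single nontrivial step is (iii) $\Leftrightarrow$ (iv). The direction (iii) $\Rightarrow$ (iv) is immediate from $\|k_a\|_{F^{2}}=1$. For (iv) $\Rightarrow$ (iii) I would invoke the standard reduction to a geometric Carleson condition via the sub--mean-value estimate
\[
|f(z)|^{2}e^{-|z|^{2}}\le C\int_{D(z,1)}|f(w)|^{2}e^{-|w|^{2}}\,\mathrm dA(w)\qquad (f\in F^{2}),
\]
combined with the pointwise lower bound $|k_a(z)|^{2}\ge e^{-1}/\pi$ on $D(a,1)$. The kernel test then controls $\sup_{a}\nu(D(a,1))$, and Fubini plus the sub--mean-value estimate promote this to a Carleson bound for arbitrary $f\in F^{2}$; alternatively I would cite Zhu's \emph{Analysis on Fock Spaces}. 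The main obstacle is really just this direction, as it is the only step that uses genuine Fock-space analysis rather than formal manipulation, and the proof becomes much shorter if the standard Fock--Carleson theorem is cited rather than reproved.
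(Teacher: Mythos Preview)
Your proposal is correct and follows essentially the same route as the paper: the duality identity $\langle U_g f,h\rangle=\langle f,\bar g h\rangle$ for (i)$\Leftrightarrow$(ii), Cauchy--Schwarz and the diagonal $f=h$ for (ii)$\Leftrightarrow$(v), the definition of Carleson measure for (ii)$\Leftrightarrow$(iii), and the reproducing-kernel test (citing Zhu) for (iii)$\Leftrightarrow$(iv). Your explicit density check for $\mathcal D(M_g)$ and the sketched sub-mean-value argument for (iv)$\Rightarrow$(iii) are slight elaborations beyond what the paper writes out, but the underlying strategy is identical.
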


A recurring theme is that the two Toeplitz realizations are governed by two distinct translated-Gaussian tests. Using Fock-Carleson measures provides a clear way of comparing the boundedness of the operators $T_g$ and $U_g$.
\begin{theorem}[Two kernel tests]
\label{thm:two-kernel-tests}
\ 
\begin{enumerate}
\item Let $g\ge 0$ be measurable. The Toeplitz form $t_g$ is bounded on $F^2\times F^2$ if and only if $g\,d\mu$ is a Fock--Carleson measure,
equivalently
\[
\sup_{a\in\C}\frac1\pi\int_{\C} e^{-|z-a|^2}\,g(z)\,dA(z)<\infty.
\]
This is the same as $\|g^{(1/4)}\|_{L^\infty(\C)}<\infty$.
\item (Supremal kernel test) Let $g : \mathbb C \to \mathbb C$ be measurable. The Toeplitz operator on the natural domain $U_g\colon L^2(\mu)\to F^2$ is bounded if and only if $|g|^2\,d\mu$ is Fock--Carleson,
equivalently
\[
\sup_{a\in\C}\frac1\pi\int_{\C} e^{-|z-a|^2}\,|g(z)|^2\,dA(z)<\infty.
\]
\end{enumerate}
For the symbol $g$ in Theorem \ref{thm:main}, all heat transforms $g^{(t)}$ are bounded for $t>0$, but condition (1) holds while condition (2) fails.
\end{theorem}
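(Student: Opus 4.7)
The plan is: Part~(2) is a direct specialization of Theorem~\ref{thm:carleson}; Part~(1) is the positive-form analogue obtained by the same kernel-testing strategy but applied one power lower; and the final separation statement falls out of Theorem~\ref{thm:main}.

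For Part~(1), since $g\ge 0$, the form $\mathfrak t_g$ is positive semidefinite on its natural domain in $F^2$. By the Cauchy--Schwarz inequality for nonnegative sesquilinear forms, boundedness of $\mathfrak t_g$ on $F^2\times F^2$ is equivalent to the diagonal estimate $\int g|f|^2\,d\mu\le C\|f\|_{F^2}^2$ for all $f\in F^2$, which is precisely the condition that $\nu:=g\,d\mu$ be a Fock--Carleson measure for $F^2$. I would then invoke the standard coherent-state characterization of Fock--Carleson measures (the same one that underlies (iii)$\Leftrightarrow$(iv) in Theorem~\ref{thm:carleson}): $\nu$ is Fock--Carleson iff $\sup_a \int |k_a|^2\,d\nu<\infty$. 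Using $|k_a(z)|^2\,d\mu(z)=\pi^{-1}e^{-|z-a|^2}\,dA(z)$, this supremum equals $\sup_a\pi^{-1}\int e^{-|z-a|^2}g(z)\,dA(z)$, which in the paper's convention (per the footnote placing the Berezin transform at $t=1/4$) is exactly $\|g^{(1/4)}\|_{L^\infty(\mathbb C)}$.

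Part~(2) is a direct quote of Theorem~\ref{thm:carleson}: the claimed chain $U_g$ bounded $\Leftrightarrow$ $|g|^2d\mu$ Fock--Carleson $\Leftrightarrow$ supremal kernel test is precisely (i)$\Leftrightarrow$(iii)$\Leftrightarrow$(iv) there, so no new work is required. For the closing separation assertion, I apply Theorem~\ref{thm:main}: hypothesis~(2) of that theorem gives $g^{(t)}\in L^\infty$ for all $t>0$, so in particular Part~(1) yields condition~(1) here (the same conclusion follows redundantly from hypothesis~(3) via Part~(1)); hypothesis~(4) says $U_g$ is unbounded, so by Part~(2) the supremal kernel test in condition~(2) must fail.

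Obstacle: neither part presents a deep difficulty, since both reduce to machinery already in place. The only genuine care required is normalization bookkeeping --- checking that the Gaussian $\pi^{-1}e^{-|z-a|^2}\,dA$ arising from $|k_a|^2 d\mu$ is precisely the heat kernel at $t=1/4$ in the paper's convention --- together with verifying that the Cauchy--Schwarz reduction for positive sesquilinear forms really does extend the diagonal estimate uniformly to the full form on $F^2\times F^2$ (which it does because the form is densely defined on $F^2$ whenever the diagonal bound holds, by monotone convergence on $|f|^2$).
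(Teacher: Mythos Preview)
Your proposal is correct and mirrors the paper's treatment: the paper handles Part~(1) by citing the classical Fock--Carleson characterization in \cite{ZhuFock} (Remark~\ref{rem:two-kernel-tests-provenance}), and handles Part~(2) exactly as you do, by invoking Theorem~\ref{thm:carleson}. Your Cauchy--Schwarz reduction of the positive form to its diagonal is precisely the mechanism the paper uses in the proof of Theorem~\ref{thm:carleson} for the equivalence (ii)$\Leftrightarrow$(v), applied one power lower; the normalization check $g^{(1/4)}(a)=\pi^{-1}\int e^{-|z-a|^2}g(z)\,dA(z)$ is correct.
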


An immediate corollary of part (1.) of this theorem is the recovery of Theorem 13 in \cite{BergerCoburn1994HeatFlow}. 

\begin{remark}
\label{rem:two-kernel-tests-provenance}
Part (1) is the classical Fock--Carleson characterization of the bounded embedding
$F^2\hookrightarrow L^2(g\,d\mu)$ (equivalently, boundedness of the Toeplitz form for $g\ge0$);
see e.g. \cite{ZhuFock}.
Part (2) is less frequently stated in this form; for completeness we will prove it below as
Theorem~\ref{thm:carleson}. %
\end{remark}

\paragraph{Benchmark: radial power symbols.}
For the radial powers $g(z)=|z|^\alpha$, the two kernel tests in
Theorem~\ref{thm:two-kernel-tests} can be evaluated explicitly. This proposition also shows that the operator $U_g$ can remain bounded in the presence of singularities, for instance for the symbol $|z|^{-c}$ for $c \in (0,1)$. 

\begin{proposition}[Power symbols]\label{prop:powers}
Let $g(z)=|z|^\alpha$ with $\alpha\in\mathbb{R}$.
\begin{enumerate}
\item[\emph{(H)}] For each $t>0$, $g^{(t)}\in L^\infty(\mathbb{C})$ if and only if $\alpha\in(-2,0]$.
\item[\emph{(T)}] $T_g:F^2\to F^2$ is bounded if and only if $\alpha\in(-2,0]$.
\item[\emph{(U)}] $U_g:\mathcal{D}\to F^2$ is bounded if and only if $\alpha\in(-1,0]$.
\end{enumerate}
In particular, for $\alpha\in(-2,-1]$ one has $T_g$ bounded but $U_g$ unbounded.
\end{proposition}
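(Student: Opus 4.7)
The plan is to reduce all three parts to a single one-parameter family of Gaussian-weighted radial moments
\[
I_\beta(a)\;:=\;\frac{1}{\pi}\int_{\mathbb C} e^{-|z-a|^2}\,|z|^\beta\,dA(z),\qquad a\in\mathbb C,\ \beta\in\mathbb R,
\]
together with its dilations. By Theorem~\ref{thm:two-kernel-tests}(1), part (T) for $g=|z|^\alpha$ is exactly the statement $\sup_{a}I_\alpha(a)<\infty$; by Theorem~\ref{thm:two-kernel-tests}(2), part (U) is $\sup_{a}I_{2\alpha}(a)<\infty$; and the heat transform in (H) is, up to a constant and a dilation, of the same shape $I_\alpha$ at a variance depending on $t$. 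Thus the whole proposition reduces to characterizing for which $\beta$ the quantity $I_\beta$ is pointwise finite and for which $\beta$ it is uniformly bounded.

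First I would establish the dichotomy: (a) $I_\beta(a)<\infty$ for all $a\in\mathbb C$ iff $\beta>-2$; and (b) $\sup_{a}I_\beta(a)<\infty$ iff $\beta\in(-2,0]$. Part (a) is elementary, since away from the origin the Gaussian factor makes the integrand integrable for every $\beta$, so only local integrability of $|z|^\beta$ at $0$ is at stake, giving $\beta>-2$. For (b), one direction uses $I_\beta(0)=\Gamma(1+\beta/2)$ (finite iff $\beta>-2$) together with the asymptotic $I_\beta(a)\asymp|a|^\beta$ as $|a|\to\infty$; combined with continuity of $I_\beta$ on $\mathbb C$ when $\beta>-2$ (via dominated convergence), this produces a finite supremum exactly when $\beta\in(-2,0]$. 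The asymptotic is established by splitting $\mathbb C = B(a,|a|/2)\cup B(0,|a|/2)\cup\text{rest}$: on $B(a,|a|/2)$ one has $|z|\asymp|a|$ and the translated Gaussian carries all but an exponentially small fraction of its mass, giving the main term $\asymp|a|^\beta$; on the other two pieces the Gaussian factor is at most $e^{-c|a|^2}$, which absorbs any polynomial growth of $|z|^\beta$ on those sets.

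With the dichotomy in hand, (T) gives $\alpha\in(-2,0]$ and (U) gives $2\alpha\in(-2,0]$, i.e.\ $\alpha\in(-1,0]$. For (H), the heat transform at time $t$ of $g$ is a constant multiple of $\int_{\mathbb C} e^{-|z-a|^2/(4t)}|z|^\alpha\,dA(z)$; the substitution $w=(z-a)/\sqrt{4t}$ rewrites this as $(4t)^{1+\alpha/2}\,I_\alpha(a/\sqrt{4t})$ up to a constant, so both the pointwise-finiteness and uniform-boundedness questions are insensitive to the choice of $t>0$ and reduce to (a)+(b) with $\beta=\alpha$, yielding $\alpha\in(-2,0]$. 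The only genuinely nontrivial step is the asymptotic $I_\beta(a)\asymp|a|^\beta$; it is routine but must be executed with some care for $\beta<0$, precisely because the singularity of $|z|^\beta$ sits at the origin, far from the Gaussian mass when $|a|$ is large, and the three-piece split above is designed so that uniform two-sided constants come out cleanly.
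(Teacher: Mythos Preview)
Your argument is correct, and it takes a genuinely different route from the paper's. The paper splits $g=g\mathbf 1_{\mathbb D}+g\mathbf 1_{\mathbb C\setminus\mathbb D}$ and invokes a separate bounded-perturbation lemma (Lemma~\ref{lem:bounded_perturbations}) to discard whichever piece is bounded; for $\alpha\le 0$ it then observes that for a compactly supported nonnegative symbol the Fock--Carleson condition collapses to plain $L^1(dA)$ integrability, and for (H) it uses a radial-monotonicity trick (the convolution of two radially decreasing functions is radially decreasing) to pin $\|g^{(t)}\|_\infty$ at the origin. You instead treat all three parts through the single family $I_\beta(a)$, establish the asymptotic $I_\beta(a)\asymp|a|^\beta$ directly, and reduce (H) to the same supremum via the dilation $g^{(t)}(x)=(4t)^{\alpha/2}I_\alpha(x/\sqrt{4t})$. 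Your route is more self-contained---no auxiliary perturbation lemma, no monotonicity argument---and makes the parallel between (H), (T), (U) completely transparent: they are literally $\sup_a I_\beta(a)<\infty$ at $\beta=\alpha,\alpha,2\alpha$. The paper's route, by contrast, cleanly isolates the local obstruction ($L^1$ near the origin) from the global one (growth at infinity), which fits its broader narrative about local versus global failure modes. One small point: in your three-piece split, the phrase ``absorbs any polynomial growth of $|z|^\beta$'' is slightly loose on the unbounded ``rest'' region when $\beta>0$, since $|z|^\beta$ is not polynomially bounded there; the clean fix is $|z|^\beta\lesssim |z-a|^\beta+|a|^\beta$, after which the Gaussian handles both terms.
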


\paragraph{Local vs. global obstructions.} 
For an unbounded symbol $g$, when comparing $U_g$ with $T_g$, it is useful to distinguish two qualitatively different ways the boundedness test for $U_g$ can fail. Unboundedness of $U_g$ can arise from a local singularity. For a symbol like $g(z) = |z|^{-1} 1_{\mathbb{D}}$, the operator fails to be bounded simply because the local $L^2$ norm diverges; indeed, the constant function $1$ fails to lie in the domain $\mathcal{D}(M_g)$ (even though the symbol is $L^1(\mathrm{d}A)$ and $T_g$ is bounded). This is a failure of basic integrability. Such obstructions exclude even standard test functions (like the constant $1$) from the domain.
The second mechanism is global. The symbol in Theorem~\ref{thm:main} is smooth ($C^\infty$) and hence locally bounded, and the resulting operator's domain contains all coherent-states. Here, unboundedness stems not from a singularity, but from a decoupling of the linear average (which controls $T_g$) from the quadratic average (which controls $U_g$) at infinity. 

However, there is a necessary constraint. If $g$ is finite a.e., then $\mathcal{D}$ (the domain of $U_g$) is dense in $L^2(\mu)$ by a truncation argument. Yet, what matters for unboundedness is the inclusion $F^2 \subset \mathcal{D}$. Specifically, if $F^2 \subset \mathcal{D}$, then $M_g: F^2 \to L^2(\mu)$ is everywhere-defined and closed, hence bounded by the Closed Graph Theorem. Consequently, $U_g$ would extend boundedly. Therefore, any counterexample with $U_g$ unbounded, including our global construction, must satisfy $F^2 \not\subset \mathcal{D}$, meaning there exist ``finite energy'' holomorphic functions that are ejected from the domain by the geometry of $g$.

\paragraph{Notation.} Throughout, $C>0$ denotes a constant whose value may change from line to line.
For nonnegative quantities $A,B$, we write $A\lesssim B$ if $A\le CB$ for some constant
$C$ independent of the relevant variables. We write $A\asymp B$ if $A\lesssim B$ and $B\lesssim A$.

\paragraph{Organization.}
Section~\ref{sec:counterexample} constructs the annuli symbol and proves Theorem~\ref{thm:main}.
Section~\ref{sec:carleson} records the sharp criterion for boundedness of $U_g$ in terms of Fock-Carleson
measures and the  kernel test in Theorem~\ref{thm:two-kernel-tests}. %
The Appendix establishes a Bargmann space barrier to heat-flow reversibility: we construct a class of symbols satisfying \eqref{eq:BC-domain} bounded at $t_0$ yet unbounded at $t_1 < t_0$. 

\subsection{Related work}

For bounded symbols $g \in L^\infty(\mathbb{C})$ one defines $T_g f = P(gf)$ on 
 Bargmann space, and boundedness and compactness can be characterized via 
Fock--Carleson measures and Berezin transforms; see \cite{ZhuFock}. Extensions to 
measure symbols and kernel-testing conditions appear in \cite{IsralowitzZhu2010Toeplitz}. For unbounded 
symbols, several realizations coexist and radial symbols already exhibit pathologies; 
see \cite{GrudskyVasilevski2002RadialComponentEffects}. Toeplitz operators generated by bounded sesquilinear forms and by 
Fock--Carleson type measures are studied in \cite{EsmeralRozenblumVasilevski2019LinvariantFockCarleson}. Heat flow estimates for 
Bargmann Toeplitz operators were developed by Berger--Coburn \cite{BergerCoburn1994HeatFlow}. 
For BMO symbols, the Berezin transform characterizes both boundedness and compactness of Toeplitz operators \cite{CoburnIsralowitzLi2011}, with connections to heat flow estimates developed in \cite{BCI2010}. The present work addresses unbounded symbols outside such classes, where this characterization breaks down.
Operator-algebraic work relating 
Gaussian regularizations to Toeplitz algebras includes \cite{Bauer2005PsiFrechetThesis,BauerFulscheRodriguezRodriguez2024OperatorsFockToeplitzAlgebra}.

The Berger--Coburn conjecture has been verified for symbols that are exponentials of 
complex quadratic polynomials via canonical transformation methods: \cite{CHS2019,CHS2023} 
established both directions for homogeneous quadratic symbols, \cite{CHSW2021} extended the sufficiency direction to inhomogeneous quadratic symbols and proved necessity for a special explicit family, and \cite{Xiong2023} completed the inhomogeneous case, establishing both directions of the conjecture for this class of symbols, while 
characterizing compactness. 

\section{A radial counterexample via ultrathin annuli}\label{sec:counterexample}
In this section we prove Theorem~\ref{thm:main}.

\subsection{Construction}

For $n\ge 2$ define
\begin{equation}\label{eq:params}
a_n:=\sqrt{n},
\qquad
\rho_n:=n^{-9/2},
\qquad
A_n:=\Big\{z\in\C:\ \big||z|-a_n\big|\le \rho_n\Big\},
\qquad
d_n:=n^{5/2}\log n,
\end{equation}
and set the radial nonnegative symbol
\begin{equation}\label{eq:g}
g(z):=\sum_{n\ge 2} d_n\,\mathbf 1_{A_n}(z).
\end{equation}
The annuli $A_n$ are pairwise disjoint for $n\ge 2$: since $a_{n+1}-a_n=\sqrt{n+1}-\sqrt{n}=1/(\sqrt{n+1}+\sqrt{n})\ge 1/(2\sqrt{n+1})$ while $\rho_n+\rho_{n+1}\le 2n^{-9/2}$, the gap between consecutive annuli satisfies $a_{n+1}-\rho_{n+1}-(a_n+\rho_n)\ge 1/(2\sqrt{n+1})-2n^{-9/2}>0$ for all $n\ge 2$.

\begin{table}[h]
\centering
\begin{tabular}{lll}
\toprule
\textbf{Quantity} & \textbf{Definition} & \textbf{Asymptotic size} \\
\midrule
$a_n$ & $\sqrt{n}$ & $a_n\to\infty$ \\
$\rho_n$ & $n^{-9/2}$ & $\rho_n\downarrow 0$ \\
$A_n$ & $\{z: a_n-\rho_n\le |z|\le a_n+\rho_n\}$ & $|A_n|\sim 4\pi a_n\rho_n \sim n^{-4}$ \\
$d_n$ & $n^{5/2}\log n$ & $d_n\to\infty$ \\
$d_n|A_n|$ &  & $d_n|A_n|\sim (\log n)/n^{3/2}$ (summable) \\
$d_n^2|A_n|$ &  & $d_n^2|A_n|\sim n\log^2 n$ (diverges) \\
$E_n$ & sector inside $A_n$, defined in \eqref{eq:En} & $|E_n|\sim n^{-5}$ \\
$d_n^2|E_n|$ &  & $d_n^2|E_n|\sim \log^2 n \to\infty$ \\
\bottomrule
\end{tabular}
\caption{Parameter scales used in the annuli construction and the resulting asymptotics.}
\end{table}

The parameters in \eqref{eq:params} are tuned to balance three requirements.
First, we want uniform control of Gaussian averages of $g$ (bounded heat transforms), which in our setting is ensured by $\sum_n |d_n||A_n|<\infty$. 
Second, we want $U_g$ to be densely defined, so we ensure $g\in L^{2}(\mu)$. 
Third, we want $U_g$ to fail the kernel test in Theorem~\ref{thm:carleson} along the translates $a=a_n\to\infty$; the geometry of $A_n$ allows $g$ to be small in Gaussian averages while $|g|^{2}$ produces a large contribution on chosen subregions of $A_n$.

\begin{lemma}\label{lem:L1}
One has $\int_{\C} g\,\dd A=\int_{\C} |g|\,\dd A =\sum_{n\ge 2} d_n|A_n|<\infty$.
\end{lemma}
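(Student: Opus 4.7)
The statement is a direct computation and there is no real obstacle; the only point to check is summability of the amplitudes against the annular areas.

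My plan is as follows. Since $g$ is a nonnegative real-valued function, $g=|g|$ pointwise, which gives the first equality for free. For the second equality, I would invoke the pairwise disjointness of the annuli $A_n$ (already established in the paragraph immediately following \eqref{eq:g}) together with Tonelli to commute the sum and the integral:
\[
\int_{\mathbb C} g\,\dd A \;=\; \sum_{n\ge 2} d_n\int_{\mathbb C}\mathbf 1_{A_n}\,\dd A \;=\; \sum_{n\ge 2} d_n\,|A_n|.
\]

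Next I would compute $|A_n|$ exactly. The set $A_n$ is the annulus $\{a_n-\rho_n\le |z|\le a_n+\rho_n\}$, so
\[
|A_n| \;=\; \pi\bigl((a_n+\rho_n)^2-(a_n-\rho_n)^2\bigr) \;=\; 4\pi\, a_n\,\rho_n.
\]
Substituting the parameter choices $a_n=\sqrt n$, $\rho_n=n^{-9/2}$, $d_n=n^{5/2}\log n$ from \eqref{eq:params} gives
\[
d_n|A_n| \;=\; 4\pi\, n^{5/2}(\log n)\cdot n^{1/2}\cdot n^{-9/2} \;=\; 4\pi\,\frac{\log n}{n^{3/2}}.
\]

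Finally, $\sum_{n\ge 2} (\log n)/n^{3/2}$ converges (e.g.\ by comparison with $\sum n^{-5/4}$, or directly by the integral test), which yields the finiteness claim. This also confirms the entry $d_n|A_n|\sim (\log n)/n^{3/2}$ listed as summable in the table of parameter scales.
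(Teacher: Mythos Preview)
Your proposal is correct and essentially identical to the paper's proof: the paper also computes $|A_n|=4\pi a_n\rho_n$ (via the polar integral rather than the difference of disk areas, which is the same calculation), substitutes the parameters to get $d_n|A_n|\asymp (\log n)/n^{3/2}$, and concludes summability. Your version is slightly more explicit about the first two equalities (nonnegativity, Tonelli), which the paper leaves implicit.
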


\begin{proof}
A polar computation gives
\[
|A_n|=\int_{a_n-\rho_n}^{a_n+\rho_n}2\pi r\,\dd r
=4\pi a_n\rho_n.
\]
Since $a_n\rho_n=\sqrt{n}\,n^{-9/2}=n^{-4}$,
\[
\sum_{n\ge 2} d_n|A_n|
\asymp \sum_{n\ge 2} n^{5/2}\log n\cdot n^{-4}
=\sum_{n\ge 2}\frac{\log n}{n^{3/2}}<\infty.
\qedhere\]
\end{proof}

We remark that $g\notin L^{2}(\mathrm{d}A)$ is necessary for showing that $U_g$ is unbounded. 
Theorem~\ref{thm:carleson} proves that boundedness of $U_g$ involves a supremum of integrals $\int e^{-|z-a|^{2}}|g(z)|^{2}\,\mathrm{d}A(z)$.
Consequently, it is necessary that $g\notin L^{2}(\mathrm{d}A)$: if $g\in L^{2}(\mathrm{d}A)$ then, since
$e^{-|z-a|^{2}}\le 1$, one has
\[
\sup_{a\in\mathbb C}\int_{\mathbb C} e^{-|z-a|^{2}}|g(z)|^{2}\,\mathrm{d}A(z)
\le \int_{\mathbb C} |g(z)|^{2}\,\mathrm{d}A(z)<\infty,
\]
and the Gaussian $L^2$ weight condition holds trivially.
For the annuli symbol we directly verify here that $g\notin L^{2}(\mathrm{d}A)$:
\[
\int_{\mathbb C} |g(z)|^{2}\,\mathrm{d}A(z)
=\sum_{n\ge2} d_n^{2}|A_n|
\asymp \sum_{n\ge2} (n^{5}\log^{2}n)\,n^{-4}
=\sum_{n\ge2} n\log^{2}n,
\] which is divergent.

\begin{lemma}
\label{lem:annuli-coherent-state}
Let $g=\sum_{n\ge2} d_n \mathbf 1_{A_n}$ be the annuli symbol defined in \eqref{eq:g}.
Then for every $a\in\C$ one has $gk_a\in L^2(\mu)$, equivalently
\[
\frac1\pi\int_{\C} e^{-|z-a|^2}|g(z)|^2\,dA(z)<\infty.
\]
\end{lemma}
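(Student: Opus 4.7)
The plan is to exploit three facts: the annuli $A_n$ are pairwise disjoint (already noted after \eqref{eq:g}), each annulus $A_n$ lives at radius $\sqrt{n}\to\infty$, and the coefficients $d_n$ grow only polynomially. Since $|a|$ is fixed while the $A_n$ recede to infinity, the Gaussian factor $e^{-|z-a|^2}$ will beat any polynomial, reducing the statement to summability of a super-exponentially decaying series.

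First, by the disjointness of the $A_n$, one has $|g|^{2}=\sum_{n\ge 2} d_n^{2}\mathbf 1_{A_n}$ pointwise, so monotone convergence gives
\[
\int_{\C} e^{-|z-a|^{2}}|g(z)|^{2}\,dA(z)
=\sum_{n\ge 2} d_n^{2}\int_{A_n} e^{-|z-a|^{2}}\,dA(z).
\]
Next, I would bound each annulus integral by a supremum times area: $\int_{A_n}e^{-|z-a|^{2}}\,dA \le |A_n|\,\sup_{z\in A_n} e^{-|z-a|^{2}}$, where $|A_n|\asymp n^{-4}$ by Lemma~\ref{lem:L1}.

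Now fix $a\in\C$ and choose $N=N(a)$ so large that $\sqrt{n}\ge 2(|a|+1)$ for $n\ge N$. For such $n$ and any $z\in A_n$ we have $|z|\ge a_n-\rho_n=\sqrt{n}-n^{-9/2}\ge \sqrt{n}-1$, hence by the reverse triangle inequality $|z-a|\ge |z|-|a|\ge \sqrt{n}-1-|a|\ge \sqrt{n}/2$. This yields $\sup_{z\in A_n}e^{-|z-a|^{2}}\le e^{-n/4}$. Combining with the estimate $d_n^{2}|A_n|\asymp n\log^{2} n$ (see the parameter table), the tail is controlled by
\[
\sum_{n\ge N} d_n^{2}|A_n|\,e^{-n/4}\ \lesssim\ \sum_{n\ge N} n\log^{2}(n)\,e^{-n/4}<\infty.
\]
Finally, the head $\sum_{2\le n<N}d_n^{2}\int_{A_n} e^{-|z-a|^{2}}\,dA$ is a finite sum of finite integrals (each $A_n$ is a compact annulus and the integrand is bounded), so it contributes finitely. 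Adding the two pieces gives the claim for every $a\in\C$.

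There is no real obstacle: the only point requiring care is that $N$ depends on $a$, which is fine because the statement is pointwise in $a$, not uniform. In fact this pointwise-vs-uniform distinction is precisely the phenomenon Corollary~\ref{cor:global} isolates---the very same estimate fails once one takes the supremum over $a=a_n$, since then one cannot push $a$ outside the active annulus, and the Gaussian factor $e^{-|z-a|^{2}}$ becomes $O(1)$ on $A_n$ rather than super-exponentially small.
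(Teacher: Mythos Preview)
Your proof is correct and follows essentially the same approach as the paper: decompose the integral as $\sum_{n\ge2} d_n^2\int_{A_n}e^{-|z-a|^2}\,dA$, split into a finite head and a tail where $|z-a|\gtrsim\sqrt n$, and use Gaussian decay to dominate the polynomial growth of $d_n^2|A_n|$. Your version is slightly more explicit about the dependence of $N$ on $a$ and the reverse triangle inequality (yielding $e^{-n/4}$ versus the paper's $e^{-n/16}$), and your closing remark correctly identifies the pointwise--uniform gap that drives Corollary~\ref{cor:global}.
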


\begin{proof}
Fix $a\in\C$ and write
\[
\frac1\pi\int_{\C} e^{-|z-a|^2}|g(z)|^2\,dA(z)
=\frac1\pi\sum_{n\ge2} d_n^2 \int_{A_n} e^{-|z-a|^2}\,dA(z).
\]
For $N$ sufficiently large, we have $|z-a| \ge \frac{1}{4}|z| \approx \frac{1}{4}\sqrt{n}$ on $A_n$, so
\[
\sum_{n\ge N} d_n^2\int_{A_n} e^{-|z-a|^2}\,dA
\le \sum_{n\ge N} d_n^2 |A_n| e^{-n/16}<\infty,
\]
since $d_n^2|A_n|$ grows at most polynomially (Table~1) and $e^{-n/16}$ is summable.
The remaining finitely many terms $n<N$ are finite because $|A_n|<\infty$ and $e^{-|z-a|^2}\le 1$.
\end{proof}

\subsection{Bounded heat transforms and bounded Toeplitz operator}

The first bound here is the $L^1\to L^\infty$ estimate for the heat kernel in $\C$,
and the second is a routine form bound using the pointwise estimate
$|f(z)|e^{-|z|^2/2}\le \|f\|_{F^2}$.
We include the short proofs for completeness.

\paragraph{Bounded heat transforms.}
Since $g\in L^{1}(\dd A)$ by Lemma~\ref{lem:L1}, for each $t>0$ we have
\begin{equation}\label{eq:heat_transform_estimate}
    0\le g^{(t)}(x)=\frac{1}{4\pi t}\int_{\mathbb C} g(y)e^{-|x-y|^{2}/(4t)}\dd A(y)
\le \frac{1}{4\pi t}\int_{\mathbb C} g(y)\dd A(y),
\end{equation}
hence $g^{(t)}\in L^\infty(\mathbb C)$ and
$\|g^{(t)}\|_{L^\infty}\le (4\pi t)^{-1}\|g\|_{L^{1}(\dd A)}$. 

\begin{proposition}[Boundedness of $T_g$ by a form estimate]\label{prop:Tg}
Assume $g \in L^1(\dd A)$. The form $\mathfrak t_g(f,h)=\int g f\overline h\,\dd\mu$ is bounded on $F^2\times F^2$, hence induces a bounded
operator $T_g:F^2\to F^2$ with
\[
\norm{T_g}\le \frac{1}{\pi}\int_{\C} |g|\,\dd A.
\]
\end{proposition}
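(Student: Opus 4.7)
The proof is short and hinges on the reproducing-kernel pointwise bound, together with the $L^1(dA)$ hypothesis. The plan is to bound $|\mathfrak t_g(f,h)|$ directly by $\|f\|_{F^2}\|h\|_{F^2}$ with constant $\pi^{-1}\|g\|_{L^1(dA)}$, then invoke the Riesz representation theorem to extract $T_g$.

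First I would record the standard pointwise estimate for the Bargmann space: evaluating against the reproducing kernel $K(\cdot,z)=e^{\cdot\,\overline z}$, whose norm is $\|K(\cdot,z)\|_{F^2}=e^{|z|^2/2}$, gives $|f(z)|\le e^{|z|^2/2}\|f\|_{F^2}$ for all $f\in F^2$ and $z\in\mathbb C$. Multiplying this pointwise bound for $f$ and $h$ yields the key weighted inequality
\[
|f(z)\,\overline{h(z)}|\,e^{-|z|^2}\le \|f\|_{F^2}\|h\|_{F^2}\qquad(z\in\mathbb C).
\]

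Next I would check that the form domain is all of $F^2$. Indeed, using the pointwise bound with $h=f$ and $d\mu=\pi^{-1}e^{-|z|^2}dA$,
\[
\int_{\mathbb C} |g(z)|\,|f(z)|^2\,d\mu(z)=\frac{1}{\pi}\int_{\mathbb C}|g(z)|\,|f(z)|^2\,e^{-|z|^2}\,dA(z)\le \frac{1}{\pi}\|f\|_{F^2}^2\int_{\mathbb C}|g|\,dA,
\]
so $|g|^{1/2}f\in L^2(\mu)$ for every $f\in F^2$, i.e.\ $\mathcal D(\mathfrak t_g)=F^2$. The same calculation, polarized, gives the form bound: for $f,h\in F^2$,
\[
|\mathfrak t_g(f,h)|\le \frac{1}{\pi}\int_{\mathbb C}|g(z)|\,|f(z)\,\overline{h(z)}|\,e^{-|z|^2}\,dA(z)\le \frac{1}{\pi}\|f\|_{F^2}\|h\|_{F^2}\int_{\mathbb C}|g|\,dA.
\]

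Finally, since $\mathfrak t_g$ is a bounded sesquilinear form on $F^2\times F^2$ (linear in $f$, conjugate-linear in $h$, which is immediate from the integral definition), the Riesz representation theorem produces a unique bounded operator $T_g:F^2\to F^2$ with $\langle T_gf,h\rangle=\mathfrak t_g(f,h)$ and operator norm $\|T_g\|\le \pi^{-1}\|g\|_{L^1(dA)}$. There is essentially no obstacle: the only subtlety worth flagging is the verification that $\mathcal D(\mathfrak t_g)=F^2$, which is why the hypothesis is the integrability of $g$ against Lebesgue measure rather than against $d\mu$ — the Gaussian weight of $d\mu$ is absorbed by the reproducing-kernel growth of $f$ and $h$.
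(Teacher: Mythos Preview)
Your proof is correct and follows essentially the same route as the paper: both use the reproducing-kernel pointwise bound $|f(z)|e^{-|z|^2/2}\le\|f\|_{F^2}$, split the Gaussian weight of $d\mu$ between $f$ and $h$, and integrate against $|g|\,dA$. Your additional remarks about the form domain and the Riesz representation step are fine details the paper leaves implicit.
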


\begin{proof}
For $f\in F^2$ one has $|f(z)|e^{-|z|^2/2}\le \|f\|_{F^2}$ (the reproducing kernel property combined with Cauchy--Schwarz). Hence
\[
\begin{aligned}
\abs{\mathfrak t_g(f,h)}
&=\frac{1}{\pi}\left|\int_{\C} g(z)\,f(z)\,\overline{h(z)}\,e^{-|z|^2}\,\dd A(z)\right|\\
&\le \frac{1}{\pi}\int_{\C} |g(z)|\,\big(|f(z)|e^{-|z|^2/2}\big)\,\big(|h(z)|e^{-|z|^2/2}\big)\,\dd A(z)\\
&\le \frac{1}{\pi}\,\|f\|_{F^2}\,\|h\|_{F^2}\int_{\C} |g|\,\dd A,
\end{aligned}
\]
and $\int |g|\,\dd A<\infty$ by Lemma~\ref{lem:L1}.
\end{proof}

Fix a small absolute constant $c\in(0,10^{-3}]$ and define an angular sector inside $A_n$ by
\begin{equation}\label{eq:En}
\phi_n:=\frac{c}{a_n^2}=\frac{c}{n},
\qquad
E_n:=\Big\{re^{i\theta}:\ a_n-\rho_n\le r\le a_n+\rho_n,\ \abs{\theta}\le \phi_n\Big\}\subset A_n.
\end{equation}

\begin{proof}[Proof of Theorem \ref{thm:main}, part (iv)]
By Theorem \ref{thm:carleson}, the operator $U_g$ is bounded if and only if the supremal kernel test is finite:
\begin{equation*}
\sup_{a\in\mathbb{C}} \frac{1}{\pi} \int_{\mathbb{C}} e^{-|z-a|^2}|g(z)|^2\,dA(z) < \infty.
\end{equation*}
We show this supremum diverges along the sequence of centers $a_n$. Consider the sector $E_n \subset A_n$ defined in \eqref{eq:En}. For $z \in E_n$ and $a=a_n$, we have $|z-a_n| \le \rho_n \to 0$, so $e^{-|z-a_n|^2} \asymp 1$. Restricting the integral to $E_n$ yields
\begin{equation*}
\int_{\mathbb{C}} e^{-|z-a_n|^2}|g(z)|^2\,dA(z) 
\ge \int_{E_n} e^{-|z-a_n|^2} d_n^2 \,dA(z)
\asymp d_n^2 |E_n|.
\end{equation*}
Using the asymptotics from Table 1, $d_n^2 |E_n| \asymp (n^{5/2} \log n)^2 \cdot n^{-5} = \log^2 n$, which diverges as $n \to \infty$. Thus $U_g$ is unbounded.
\end{proof}

\begin{lemma}[Area of $E_n$]\label{lem:En-area}
One has $|E_n|\asymp a_n\rho_n\phi_n\asymp n^{-5}$.
\end{lemma}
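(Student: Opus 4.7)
The plan is to compute $|E_n|$ directly in polar coordinates and then substitute the values of $a_n$, $\rho_n$, and $\phi_n$ from \eqref{eq:params} and \eqref{eq:En}. Since $E_n$ is a product region in $(r,\theta)$, its area factors cleanly.

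First I would write
\[
|E_n|=\int_{-\phi_n}^{\phi_n}\!\!\int_{a_n-\rho_n}^{a_n+\rho_n} r\,dr\,d\theta
=2\phi_n\cdot\tfrac12\bigl[(a_n+\rho_n)^2-(a_n-\rho_n)^2\bigr]
=4\,a_n\rho_n\phi_n.
\]
This is an exact identity, so in particular $|E_n|\asymp a_n\rho_n\phi_n$ with implicit constants that are genuine equalities up to the factor $4$.

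Second, I would substitute $a_n=\sqrt{n}$, $\rho_n=n^{-9/2}$, and $\phi_n=c/n$ to obtain
\[
a_n\rho_n\phi_n=\sqrt{n}\cdot n^{-9/2}\cdot \frac{c}{n}=c\,n^{-5},
\]
which establishes the second asymptotic equivalence in the statement.

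There is no serious obstacle here; the only mild care needed is to note that the inner $r$-integral telescopes exactly to $2a_n\rho_n$ (no $\rho_n^2$ remainder appears), so the estimate holds as an equality rather than merely an asymptotic, and the radial weight $r$ does not need to be bounded above and below on the thin annulus. The lemma then follows by combining the two displays.
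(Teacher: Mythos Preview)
Your proof is correct and follows essentially the same approach as the paper: compute $|E_n|$ in polar coordinates and then substitute the parameter values. The only cosmetic difference is that you evaluate the radial integral exactly to obtain $|E_n|=4a_n\rho_n\phi_n$, whereas the paper bounds $r\asymp a_n$ on the thin annulus and writes the asymptotic directly; both arrive at the same conclusion.
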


\begin{proof}
In polar coordinates,
\[
|E_n|=\int_{a_n-\rho_n}^{a_n+\rho_n}\int_{-\phi_n}^{\phi_n} r\,\dd\theta\,\dd r
\asymp (2\phi_n)\cdot a_n\cdot (2\rho_n)\asymp a_n\rho_n\phi_n.
\] The size estimate of $a_n\rho_n\phi_n$ follows directly from the definitions of $a_n, \rho_n$ and $\phi_n$. 
\end{proof}

We choose $E_n$ so that the kernel has nearly constant phase, and a magnitude approximately $e^{-a_n^2}$ on $E_n\times E_n$. 

\begin{lemma}[Kernel lower bound on $E_n\times E_n$]\label{lem:kernel-lower}
There exist $n_0$ and $c_0>0$ such that for all $n\ge n_0$ and all $w,\xi\in E_n$,
\[
\Rea \Big(K(\xi,w)\,\dd\mu(w)\,\dd\mu(\xi)\Big)\ \ge\ c_0\,e^{-a_n^2}\,\dd A(w)\,\dd A(\xi).
\]
Consequently,
\[
\iint_{E_n\times E_n} K(\xi,w)\,\dd\mu(w)\,\dd\mu(\xi)\ \ge\ c_0\,e^{-a_n^2}|E_n|^2.
\]
\end{lemma}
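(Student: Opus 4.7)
}
The plan is to factor the weighted kernel into a controlled modulus times a small imaginary phase, and then verify on $E_n\times E_n$ that (i) the modulus is of order $e^{-a_n^2}$, and (ii) the phase remains in a sector where $\cos$ is bounded below by a positive absolute constant once $c$ in \eqref{eq:En} is taken small enough. Concretely, starting from $K(\xi,w)=e^{\xi\overline w}$ and the algebraic identity
\[
\xi\overline w - |\xi|^2 - |w|^2
\;=\; -\tfrac12|\xi-w|^2 \;-\;\tfrac12\bigl(|\xi|^2+|w|^2\bigr)\;+\; i\,\Ima(\xi\overline w),
\]
one writes
\[
K(\xi,w)\,\dd\mu(w)\,\dd\mu(\xi)
\;=\;
\pi^{-2}\,e^{-\tfrac12|\xi-w|^2}\,e^{-\tfrac12(|\xi|^2+|w|^2)}\,e^{\,i\,\Ima(\xi\overline w)}
\,\dd A(w)\,\dd A(\xi).
\]
Taking real parts reduces the claim to three uniform estimates on $E_n\times E_n$.

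For $\xi=r_2 e^{i\theta_2}$ and $w=r_1 e^{i\theta_1}$ with $r_j\in[a_n-\rho_n,a_n+\rho_n]$ and $|\theta_j|\le\phi_n$, one has
\[
|\xi-w|^2
\;=\;(r_1-r_2)^2 + 2r_1r_2\bigl(1-\cos(\theta_2-\theta_1)\bigr)
\;\le\; 4\rho_n^2 + (a_n+\rho_n)^2(2\phi_n)^2.
\]
Using $\rho_n=n^{-9/2}$ and $\phi_n=c/a_n^2=c/n$ gives $|\xi-w|^2\le 4c^2/n+O(n^{-8})$, which tends to $0$; hence $e^{-|\xi-w|^2/2}\ge 1/2$ for $n$ large. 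Similarly $(|\xi|^2+|w|^2)/2\le(a_n+\rho_n)^2=a_n^2+O(n^{-4})$, so $e^{-(|\xi|^2+|w|^2)/2}\ge e^{-a_n^2}e^{-O(n^{-4})}\ge \tfrac12 e^{-a_n^2}$ for large $n$. Finally,
\[
\bigl|\Ima(\xi\overline w)\bigr|
\;=\;r_1r_2\,|\sin(\theta_2-\theta_1)|
\;\le\;(a_n+\rho_n)^2\cdot 2\phi_n
\;\le\;2c+O(n^{-4}),
\]
so choosing $c\le 10^{-3}$ from \eqref{eq:En} guarantees $|\Ima(\xi\overline w)|\le 1/100$ on $E_n\times E_n$ for all large $n$, and hence $\cos\bigl(\Ima(\xi\overline w)\bigr)\ge \cos(1/100)\ge 0.99$.

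Combining these three estimates with the factorization above yields the pointwise lower bound
\[
\Rea\Bigl(K(\xi,w)\,\dd\mu(w)\,\dd\mu(\xi)\Bigr)
\;\ge\;
\tfrac{1}{4\pi^2}\cos(1/100)\,e^{-a_n^2}\,\dd A(w)\,\dd A(\xi)
\;=\; c_0\,e^{-a_n^2}\,\dd A(w)\,\dd A(\xi)
\]
for some absolute $c_0>0$ and all $n\ge n_0$, which is the first claim; integrating over $E_n\times E_n$ gives the second. The main thing to watch is the interaction between the three scales $\rho_n,\phi_n,a_n$: the angular width $\phi_n=c/n$ is tuned precisely so that the phase $\Ima(\xi\overline w)$ stays inside a fixed sector as $n\to\infty$, while the radial thickness $\rho_n=n^{-9/2}$ is more than small enough to make both $|\xi-w|^2$ and $(|\xi|^2+|w|^2)/2-a_n^2$ negligible; verifying this quantitative compatibility is essentially the only delicate point.
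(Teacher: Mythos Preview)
Your argument is correct and follows essentially the same approach as the paper's proof, with a slightly cleaner organization: you use the identity $\Rea(\xi\overline w)-|\xi|^2-|w|^2=-\tfrac12|\xi-w|^2-\tfrac12(|\xi|^2+|w|^2)$ to separate the modulus into two factors, whereas the paper estimates $rr'\cos(\Delta\theta)-r^2-r'^2$ directly; the phase bound and the resulting constants are handled identically in both. The one small omission is in passing from the pointwise real-part bound to the second display: integrating gives only $\Rea\bigl(\iint_{E_n\times E_n}K(\xi,w)\,\dd\mu\,\dd\mu\bigr)\ge c_0 e^{-a_n^2}|E_n|^2$, and you should note (as the paper does) that the double integral is real, since $\overline{K(\xi,w)}=K(w,\xi)$ and the domain is symmetric under $(w,\xi)\mapsto(\xi,w)$.
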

\begin{proof}
Using \eqref{eq:norm}, one has
\[
K(\xi,w)\,\dd\mu(w)\,\dd\mu(\xi)
=\frac{1}{\pi^2}\exp \big(\xi\overline w-|w|^2-|\xi|^2\big)\,\dd A(w)\,\dd A(\xi).
\]
Write $w=re^{i\theta}$, $\xi=r'e^{i\theta'}$ with $r=a_n+u$, $r'=a_n+v$,
$|u|,|v|\le \rho_n$ and $|\theta|,|\theta'|\le \phi_n$. Put $\Delta\theta=\theta'-\theta$.
Then
\[
\xi\overline w = rr'e^{i\Delta\theta},
\qquad
\Ima(\xi\overline w)=rr'\sin(\Delta\theta),
\qquad
\Rea(\xi\overline w-|w|^2-|\xi|^2)=rr'\cos(\Delta\theta)-r^2-r'^2.
\]
On $E_n\times E_n$ we have $|\Delta\theta|\le 2\phi_n=2c/a_n^2$, hence
\[
|\Ima(\xi\overline w)|\le rr'|\Delta\theta|\le (a_n+\rho_n)^2\cdot \frac{2c}{a_n^2}=2c+o(1).
\]
Choosing $c$ small ensures $|\Ima(\xi\overline w)|\le \pi/3$ for all large $n$, so
$\cos(\Ima(\xi\overline w))\ge 1/2$.

For the real part, using $\cos(\Delta\theta)\ge 1-O(\Delta\theta^2)$ and $r,r'=a_n+O(\rho_n)$ gives
\[
rr'\cos(\Delta\theta)-r^2-r'^2
\ge -a_n^2 - C(a_n\rho_n + \rho_n^2 + a_n^2\Delta\theta^2)
= -a_n^2 - o(1),
\]
since $\rho_n\to0$, $a_n\rho_n=n^{-4}\to0$ and $a_n^2\Delta\theta^2\lesssim a_n^2\phi_n^2\asymp 1/a_n^2\to0$.
Therefore, for all large $n$,
\[
\Rea  \Big(\exp(\xi\overline w-|w|^2-|\xi|^2)\Big)
= e^{\Rea(\xi \overline w - |w|^2 - |\xi|^2)}\cos(\Ima(\xi\overline w))
\ge \tfrac12 e^{-a_n^2-o(1)}\ge c\,e^{-a_n^2},
\]
which yields the stated pointwise bound (absorbing constants into $c_0$). Integrating over $E_n\times E_n$ gives the
second claim. 
We observe that the double integral $I:=\iint_{E_n\times E_n} K(\xi,w)\,\dd\mu(w)\,\dd\mu(\xi)$ is real: since $\overline{K(\xi,w)}=\overline{e^{\xi\overline w}}=e^{\overline\xi w}=K(w,\xi)$ and the domain $E_n\times E_n$ is symmetric under $(w,\xi)\mapsto(\xi,w)$, we have $\overline I=I$. Hence $I=\Rea(I)\ge c_0\,e^{-a_n^2}|E_n|^2$.
\end{proof}

\begin{proposition}[$U_g$ is unbounded]\label{prop:Ug-unbdd}
With $g$ defined by \eqref{eq:g}, the operator $U_g:\mathcal D\subset L^2(\mu)\to F^2$ is unbounded.
\end{proposition}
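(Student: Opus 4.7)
The plan is to exhibit a sequence of test functions for which the Rayleigh quotient of $U_g$ diverges. The natural candidates are the indicators $f_n := \mathbf 1_{E_n}$ of the angular sectors defined in \eqref{eq:En}. First I would verify membership in $\mathcal D(M_g)$: since $g\equiv d_n$ on $A_n\supset E_n$ and $|E_n|<\infty$, one has $gf_n = d_n\mathbf 1_{E_n}\in L^2(\mu)$. The denominator is straightforward: on $E_n$ one has $|z|^2 = a_n^2 + O(a_n\rho_n) = a_n^2 + o(1)$, hence $e^{-|z|^2}\asymp e^{-a_n^2}$, and combining with Lemma~\ref{lem:En-area} yields $\|f_n\|_{L^2(\mu)}^2\asymp e^{-a_n^2}|E_n|$.

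For the numerator, I would use that $P$ is an orthogonal projection on $L^2(\mu)$, so
\[
\|U_g f_n\|_{F^2}^2 = \|P(gf_n)\|_{L^2(\mu)}^2 = \langle P(gf_n),\, gf_n\rangle_{L^2(\mu)}.
\]
Unfolding $P(gf_n)(z) = d_n\int_{E_n} K(z,w)\,\dd\mu(w)$ and using $gf_n = d_n\mathbf 1_{E_n}$ converts this into the double integral
\[
\|U_g f_n\|_{F^2}^2 = d_n^2\iint_{E_n\times E_n} K(z,w)\,\dd\mu(w)\,\dd\mu(z),
\]
which Lemma~\ref{lem:kernel-lower} bounds below by $c_0\,d_n^2\,e^{-a_n^2}|E_n|^2$. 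Dividing by the denominator,
\[
\frac{\|U_g f_n\|_{F^2}^2}{\|f_n\|_{L^2(\mu)}^2} \gtrsim d_n^2\,|E_n| \asymp \log^2 n \longrightarrow \infty,
\]
which precludes $U_g$ from extending to a bounded operator on $L^2(\mu)$.

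The computational core is already packaged into Lemma~\ref{lem:kernel-lower}; what remains is the clean reproducing-kernel identity above and the bookkeeping of asymptotics from Table~1. The main conceptual obstacle is the choice of test function: one must use the angular sector $E_n$ rather than the full annulus $A_n$, since the aperture $\phi_n = c/a_n^2$ is precisely what keeps the kernel phase bounded, so that the double integral retains a positive real part of size $\asymp e^{-a_n^2}|E_n|^2$. With this choice the Gaussian weights coming from $\dd\mu\otimes\dd\mu$ and from $\|f_n\|_{L^2(\mu)}^2$ cancel exactly, leaving only the purely geometric factor $d_n^2|E_n|$, whose logarithmic divergence is engineered by the parameter choices in \eqref{eq:params}.
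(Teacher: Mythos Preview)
Your proof is correct and essentially identical to the paper's: both use the indicator of the sector $E_n$ as test function, reduce $\|U_g f_n\|_{F^2}^2$ to the double integral $d_n^2\iint_{E_n\times E_n}K(\xi,w)\,\dd\mu(w)\,\dd\mu(\xi)$, invoke Lemma~\ref{lem:kernel-lower}, and extract the $\log^2 n$ divergence from $d_n^2|E_n|$. The only cosmetic differences are that the paper normalizes $f_n$ by a constant $c_n$ so that $\|f_n\|_{L^2(\mu)}\asymp 1$, and reaches the double integral via the reproducing identity $\int K(z,w)\overline{K(z,\xi)}\,\dd\mu(z)=K(\xi,w)$ rather than your (equivalent) projection identity $\|Ph\|^2=\langle Ph,h\rangle$.
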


\begin{proof}
Define $f_n=c_n\mathbf 1_{E_n}$ with
\begin{equation}\label{eq:cn}
|c_n|^2:=\frac{\pi e^{a_n^2}}{|E_n|}.
\end{equation}
Since $|z|^2=a_n^2+O(a_n\rho_n)=a_n^2+o(1)$ on $E_n$, we have $e^{-|z|^2}\asymp e^{-a_n^2}$ there, hence
\[
\norm{f_n}_{L^2(\mu)}^2
=|c_n|^2\int_{E_n}\dd\mu
=\frac{|c_n|^2}{\pi}\int_{E_n} e^{-|z|^2}\,\dd A
\asymp \frac{|c_n|^2}{\pi}e^{-a_n^2}|E_n|
=1.
\]
Moreover $g=d_n$ on $E_n$, so $f_n\in\mathcal D$.

Now
\[
U_g f_n(z)=\int_{\C} g(w)f_n(w)\,K(z,w)\,\dd\mu(w)=d_nc_n\int_{E_n}K(z,w)\,\dd\mu(w).
\]
Expanding the $F^2$ norm square and using the reproducing identity
$\int K(z,w)\overline{K(z,\xi)}\,\dd\mu(z)=K(\xi,w)$ gives
\[
\norm{U_g f_n}_{F^2}^2
=|d_nc_n|^2\iint_{E_n\times E_n} K(\xi,w)\,\dd\mu(w)\,\dd\mu(\xi).
\]
Applying Lemma~\ref{lem:kernel-lower} and \eqref{eq:cn} yields
\[
\norm{U_g f_n}_{F^2}^2
\gtrsim d_n^2\,|c_n|^2\,e^{-a_n^2}|E_n|^2
= d_n^2\,|E_n|.
\]
By Lemma~\ref{lem:En-area}, $|E_n|\asymp n^{-5}$ and $d_n=n^{5/2}\log n$, hence
$d_n^2|E_n|\asymp \log^2 n\to\infty$.
Since $\norm{f_n}_{L^2(\mu)}\asymp 1$, $U_g$ is unbounded.
\end{proof}

\begin{proof}[Proof of Theorem~\ref{thm:main}, parts (ii)-(iv)]
Display \eqref{eq:heat_transform_estimate} and Proposition \ref{prop:Tg} give (ii) and (iii). Proposition~\ref{prop:Ug-unbdd} gives (iv). 

To obtain the claim in the main theorem that $g$ may be chosen $C^\infty$, we can replace $\mathbf 1_{A_n}$ by radial bumps $\psi_n\in C_c^\infty(\C)$ with
$\mathrm{supp}\,\psi_n\subset A_n$, $0\le \psi_n\le 1$, and $\int_{\C}\psi_n\,\dd A\asymp |A_n|$.
Setting $g=\sum_n d_n\psi_n$, all estimates in this section remain valid (up to differences in absolute constants). This version of $g$ remains radial as well; the smoothing operation preserves the radial symmetry. One may replace the sector indicators $\mathbf 1_{E_n}$ by smooth cutoffs $\eta_n$ supported in $E_n$ with
$\int \eta_n\,\dd A\asymp |E_n|$, so the test functions $f_n$ can be taken smooth as well, if desired. 
\end{proof}

The supremal kernel test (condition (iv) in Theorem~\ref{thm:carleson}) directly confirms the unboundedness. Restricting the integral in (iv) to the sector $E_n \subset A_n$ where $|z-a_n| \approx 0$, we find that the quantity is comparable to $d_n^2 |E_n|$. Substituting the parameters from Table 1, we recover the divergence rate$$d_n^2 |E_n| \asymp (n^{5/2} \log n)^2 \cdot n^{-5} = (\log n)^2,$$which matches the explicit operator norm divergence derived in Proposition~\ref{prop:Ug-unbdd}. 

\section{When is $U_g=P(g\cdot)$ bounded? A Carleson criterion}\label{sec:carleson}

This section records the sharp boundedness criterion for the operator
$U_g:\mathcal D\subset L^2(\mu)\to F^2$, $U_g f=P(gf)$. One may take the viewpoint that the operator-theoretic distinction between $T_g$ and $U_g$ is, at bottom, a distinction between two Carleson embedding conditions.

We introduce the definition of the Carleson measure for the  Bargmann space (also called Fock–Carleson measure). 
\begin{definition}[Carleson measure for $F^2(\C)$]\label{def:Fock-Carleson measure}
A positive Borel measure $\nu$ on $\C$ is a Carleson measure for $F^2(\C)$ if there exists $C>0$ such that
\[
\int_{\C}\abs{h(z)}^2\,\dd\nu(z)\le C\norm{h}_{F^2}^2
\qquad \text{for all } h\in F^2(\C).
\]
\end{definition}

Here, we prove Theorem~\ref{thm:carleson}, which is a characterization of bounded $U_g$.

\begin{proof}[Proof of Theorem~\ref{thm:carleson}]
For $f\in\mathcal D$ and $h\in F^2$,
\[
\langle U_g f,h\rangle_{F^2}
=\langle P(gf),h\rangle_{L^2(\mu)}
=\langle gf,h\rangle_{L^2(\mu)}
=\langle f,\overline g\,h\rangle_{L^2(\mu)}.
\]
Thus $U_g$ is bounded if and only if the map $h\mapsto \overline g\,h$ is bounded $F^2\to L^2(\mu)$,
which is (ii). The equivalence (ii)$\Leftrightarrow$(v) is immediate: (ii) implies (v) by Cauchy--Schwarz, and (v) implies (ii) by testing with $h=f$. The equivalence (ii)$\Leftrightarrow$(iii) is the definition of Carleson measure for $\nu=|g|^2\mu$.

For (iii)$\Rightarrow$(iv), test (iii) with $h=k_a$ and use $\|k_a\|_{F^2}=1$.
The implication (iv)$\Rightarrow$(iii) is the reproducing-kernel characterization of Fock--Carleson measures.
In our normalization it is the special case $p=2$, $\alpha=1$ of a criterion presented in \cite{ZhuFock}:
condition (iv) matches Theorem~3.29(b) in \cite{ZhuFock}, and (iii) matches Theorem~3.29(a). That theorem and its proof were taken from an Isralowitz--Zhu article \cite{IsralowitzZhu2010Toeplitz}. In \cite{ZhuFock}, the author provides a proof that (b) and (a) are equivalent (indeed, Theorem 3.29 in \cite{ZhuFock} identifies them with a third condition as well, which we do not need here).
\end{proof}

As an application of Theorem~\ref{thm:carleson}, we write the sharp boundedness criterion for $U_g$, $T_g$ and the all-time heat transform when $g(z) = |z|^\alpha$. 

\medskip
    \noindent\textbf{Summary table.}
    
    \begin{center}
    \renewcommand{\arraystretch}{1.25}
    \begin{tabular}{c|c|c|c}
    \hline
    Exponent $\alpha$ & Heat $g^{(t)}$ ($t>0$) & $T_g$ on $F^2$ & $U_g:L^2(\mu)\to F^2$ \\
    \hline
    $\alpha\le -2$ & \textbf{unbounded} & not bounded (form diverges) & \textbf{unbounded} \\
    \hline
    $\boxed{-2<\alpha\le -1}$ & \textbf{bounded} & \textbf{bounded} & \textbf{unbounded} \\
    \hline
    $-1<\alpha\le 0$ & \textbf{bounded} & \textbf{bounded} & \textbf{bounded} \\
    \hline
    $\alpha>0$ & \textbf{unbounded} (growth at $\infty$) & \textbf{unbounded} & \textbf{unbounded} \\
    \hline
    \end{tabular}
    \end{center}

\begin{lemma}[Bounded Perturbations]\label{lem:bounded_perturbations}
Let $g$ be a measurable symbol and let $h \in L^{\infty}(\mathbb{C})$.
\begin{enumerate}
    \item The form-defined operator $T_{g+h}$ is bounded on $F^2$ if and only if $T_g$ is bounded.
    \item The natural-domain operator $U_{g+h}$ is bounded from $L^2(\mu)$ to $F^2$ if and only if $U_g$ is bounded.
\end{enumerate}
\end{lemma}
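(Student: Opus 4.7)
The plan is to treat both parts as consequences of the same algebraic fact: when $h \in L^\infty(\C)$, the map $g \mapsto g+h$ amounts to adding a fixed bounded form (part 1) or a fixed bounded operator (part 2), without disturbing the relevant domain. The iff then reduces to the elementary observation that if $B$ is a fixed bounded object, then $A$ is bounded iff $A+B$ is.

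For part (1), I would first verify that $\mathfrak t_h$ extends to a bounded form on $F^2 \times F^2$: for $f,k \in F^2$, the identity $\norm{\cdot}_{L^2(\mu)} = \norm{\cdot}_{F^2}$ on $F^2$ combined with Cauchy--Schwarz in $L^2(\mu)$ yields
\[
\abs{\mathfrak t_h(f,k)} \le \norm{h}_{L^\infty}\norm{f}_{F^2}\norm{k}_{F^2},
\]
so $T_h$ exists and is bounded with $\norm{T_h} \le \norm{h}_{L^\infty}$. Next I would confirm $\mathcal D(\mathfrak t_{g+h}) = \mathcal D(\mathfrak t_g)$ using the two-sided pointwise bounds $|g+h| \le |g| + \norm{h}_{L^\infty}$ and $|g| \le |g+h| + \norm{h}_{L^\infty}$, together with the fact that $\norm{h}_{L^\infty}\int |f|^2 \dd\mu < \infty$ for every $f \in F^2$; this gives $\int |g+h||f|^2 \dd\mu < \infty$ iff $\int |g||f|^2 \dd\mu < \infty$. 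On this common domain, linearity of the integral gives $\mathfrak t_{g+h} = \mathfrak t_g + \mathfrak t_h$, so since $\mathfrak t_h$ is globally bounded, boundedness of either $\mathfrak t_g$ or $\mathfrak t_{g+h}$ on $F^2 \times F^2$ is equivalent to boundedness of the other. The symmetry $(g,h) \leftrightarrow (g+h,-h)$ closes the iff.

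For part (2), the argument is the operator-level analogue. Since $h \in L^\infty(\C)$, $M_h$ is bounded on $L^2(\mu)$ with $\norm{M_h} \le \norm{h}_{L^\infty}$, so $PM_h:L^2(\mu)\to F^2$ is bounded. The same pointwise comparisons from part (1) give $\mathcal D(M_{g+h}) = \mathcal D(M_g)$, and on this common domain $U_{g+h} = U_g + PM_h$; boundedness of $U_g$ is therefore equivalent to boundedness of $U_{g+h}$. I expect no substantive obstacle: the argument is bookkeeping confirming that bounded perturbations of the symbol preserve both the natural and the form domains. An alternative route to (2) is to apply Theorem~\ref{thm:carleson} and compare the Fock--Carleson character of $|g+h|^2 \dd\mu$ and $|g|^2 \dd\mu$ via the expansion $|g+h|^2 = |g|^2 + 2\Rea(g\overline h) + |h|^2$ and a Cauchy inequality on the cross term, but this is more roundabout than the direct operator argument.
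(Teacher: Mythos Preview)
Your proposal is correct and follows essentially the same approach as the paper: both arguments observe that $h\in L^\infty$ makes $T_h$ (resp.\ $U_h=PM_h$) bounded, verify that the form domain (resp.\ natural domain) is unchanged under the perturbation, and then use $T_{g+h}=T_g+T_h$ (resp.\ $U_{g+h}=U_g+U_h$) to conclude. Your treatment is slightly more explicit about the form-domain equality $\mathcal D(\mathfrak t_{g+h})=\mathcal D(\mathfrak t_g)$, which the paper handles with the phrase ``in the sense of forms,'' but the substance is identical.
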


\begin{proof}
Since $h \in L^{\infty}(\mathbb{C})$, the multiplication operator $M_h$ is bounded on $L^2(\mu)$. Consequently, both $T_h$ and $U_h = P M_h$ are bounded operators.

For (1), linearity of the sesquilinear form gives $T_{g+h} = T_g + T_h$ (in the sense of forms). Since $T_h$ is bounded, $T_{g+h}$ is bounded if and only if $T_g$ is bounded.

For (2), we first observe that the natural domains coincide: $\mathcal{D}(g) = \mathcal{D}(g+h)$. Indeed, for any $f \in L^2(\mu)$, we have $|hf| \le \|h\|_{\infty}|f| \in L^2(\mu)$, so $gf \in L^2(\mu)$ if and only if $(g+h)f \in L^2(\mu)$. On this common domain, $U_{g+h} = U_g + U_h$. Since $U_h$ is bounded, $U_{g+h}$ is bounded if and only if $U_g$ is bounded.
\end{proof}

\begin{proof}[Proof of Proposition~\ref{prop:powers}]
Let $g(z) = |z|^{\alpha}$. Decompose $g = g_{in} + g_{out}$ where $g_{in} = g \mathbf{1}_{\mathbb{D}}$ and $g_{out} = g \mathbf{1}_{\mathbb{C} \setminus \mathbb{D}}$.

\textbf{Case 1: $\alpha > 0$ (Growth at infinity).}
Here $g_{in}$ is bounded. By Lemma~\ref{lem:bounded_perturbations}, the boundedness of $T_g$ and $U_g$ depends only on $g_{out}$. Since $g_{out}(z) \to \infty$ as $|z| \to \infty$, the heat transform (Gaussian average) $g_{out}^{(t)}(a) \to \infty$ as $|a| \to \infty$. By Theorem~\ref{thm:two-kernel-tests}(1), the unboundedness of the heat transform implies $T_{g_{out}}$ is unbounded. Since $\|T f\| \le \|U f\|$ whenever defined, $U_{g_{out}}$ is also unbounded. Thus, for $\alpha > 0$, neither operator is bounded.

\textbf{Case 2: $\alpha \le 0$ (Singularity at origin).}
Here $g_{out}$ is bounded. By the Lemma, boundedness depends only on the compactly supported part $g_{in}$. Since $g_{in}$ has compact support, the Fock-Carleson conditions in Theorem~\ref{thm:two-kernel-tests} reduce to standard integrability with respect to area measure $\dd A$.

\begin{enumerate}
    \item[(T)] \textbf{Boundedness of $T_g$:}
    By Theorem~\ref{thm:two-kernel-tests}(1), for a nonnegative symbol, $T_{g_{in}}$ is bounded if and only if $g_{in} \dd\mu$ is a Fock-Carleson measure. For a nonnegative compactly supported symbol like the one being considered here, this is equivalent to $g_{in} \in L^1(\dd A)$.
    \[
    \int_{\mathbb{D}} |z|^{\alpha} \, dA(z) = 2\pi \int_0^1 r^{\alpha} r \, dr < \infty \iff \alpha + 1 > -1 \iff \alpha > -2.
    \]
    Thus $T_g$ is bounded if and only if $\alpha \in (-2, 0]$.

    \item[(U)] \textbf{Boundedness of $U_g$:}
    By Theorem~\ref{thm:two-kernel-tests}(2), $U_{g_{in}}$ is bounded if and only if $|g_{in}|^2 \dd\mu$ is a Fock-Carleson measure. This is equivalent to $|g_{in}|^2 \in L^1(\dd A)$.
    \[
    \int_{\mathbb{D}} |z|^{2\alpha} \, dA(z) = 2\pi \int_0^1 r^{2\alpha} r \, dr < \infty \iff 2\alpha + 1 > -1 \iff \alpha > -1.
    \]
    Thus $U_g$ is bounded if and only if $\alpha \in (-1, 0]$.
\end{enumerate}

For $\alpha>0$ and $|x|$ large, $g^{(t)}(x)\gtrsim (|x|-1)^\alpha\to\infty$, so $g^{(t)}\notin L^\infty(\mathbb{C})$.
For $\alpha\le 0$, the map $r\mapsto r^\alpha$ and the kernel are both radially decreasing, so their convolution is radially decreasing; in particular $g^{(t)}(x) \leq g^{(t)}(0)$ for all $x$, so $\|g^{(t)}\|_\infty=g^{(t)}(0)$. By examining the heat transform at the origin, one sees that $g^{(t)}(0)$ is finite only for $\alpha > -2$. 
\end{proof}

\subsection*{Acknowledgements}
The author thanks Otte Heinävaara, Michael Hitrik, Jeck Lim, Izak Oltman, and Jared Wunsch for valuable discussions. This work grew out of the American Institute of Mathematics workshop on Riemann--Hilbert problems, Toeplitz matrices, and applications. The author is an Olga Taussky and John Todd Fellow at Caltech and was supported by the PMA Division of Caltech and NSF grant DMS-2346799. The author gratefully acknowledges the hospitality of the American Institute of Mathematics and the Simons Laufer Mathematical Sciences Institute (SLMath), where part of this work was completed.

\section{Appendix: Irreversibility of heat flow and obstructions to bootstrapping}

Berger and Coburn \cite{BergerCoburn1994HeatFlow} established that $\|T_g\|\le C(t)\|g^{(t)}\|_\infty$ for $0<t<1/4$ in their normalization. As explained in the footnote in the introduction, this corresponds to $0<t<1/8$ in the present paper's normalization. The Berger--Coburn conjecture proposes that the sharp critical time for boundedness is $t=1/8$ in our normalization. We prove that this gap cannot be closed in a generic fashion by bootstrapping the heat flow, as regularity at the critical time $t=1/8$ does not imply regularity at any earlier time $t < 1/8$ within the admissible class. In fact, we prove this for any two time values $0<t_1<t_0$.

Let $\mu$ be the Gaussian measure $d\mu(z)=\pi^{-1}e^{-|z|^2}dA(z)$. 
For notational convenience in this section, let $H_t$ denote the heat operator, so that $H_t g = g^{(t)}$, where $t>0$.

\begin{theorem}[Irreversibility of heat flow]
Fix $0 < t_1 < t_0$. There exists a real-valued measurable function $g$ such that:
\begin{enumerate}
    \item For every $a \in \mathbb{C}$, $g k_a \in L^2(\mu)$.
    \item $H_{t_0}g \in L^\infty(\mathbb{C})$.
    \item $H_{t_1}g \notin L^\infty(\mathbb{C})$.
\end{enumerate}
In particular, boundedness of the critical regularization $g^{(1/8)}$ does not imply boundedness of $g^{(1/8-\varepsilon)}$ for any $\varepsilon>0$, even under the coherent-state domain hypothesis.
\end{theorem}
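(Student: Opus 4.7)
The construction is a superposition of well-separated high-frequency wave packets whose amplitudes are tuned to exactly saturate the heat damping at the larger time $t_0$, and therefore overshoot it at the smaller time $t_1$. Let $\phi\in C_c^\infty(\C)$ be a real radial bump with $\phi(0)=1$ and $\mathrm{supp}(\phi)\subset B(0,1)$, and identify $\C\cong\mathbb R^2$ with an orthonormal frame $\mathbf e_1,\mathbf e_2$. For a parameter $M>0$ to be chosen large, set $x_n:=Mn^2\mathbf e_1$, $\xi_n:=n\mathbf e_2$ (so that $\xi_n\cdot x_n=0$), $A_n:=e^{t_0 n^2}$, and
\[
g(z)\;:=\;\sum_{n\ge 1}\,A_n\,\phi(z-x_n)\cos(\xi_n\cdot z).
\]
For $M$ large enough the summands have pairwise disjoint supports, and $g$ is a real-valued measurable function.

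The engine of the argument is a pointwise identity for the $n$-th wave packet $u_n(z):=A_n\,\phi(z-x_n)\cos(\xi_n\cdot z)$. Substituting $u_n$ into $H_t u_n(x_n)$, translating by $x_n$, using $\xi_n\cdot x_n=0$, and invoking the exact Gaussian identity $\int p_t(u)\cos(\xi_n\cdot u)\,\dd A(u)=e^{-t|\xi_n|^2}$ (where $p_t(u)=(4\pi t)^{-1}e^{-|u|^2/(4t)}$), together with $\phi(0)=1$ and the Gaussian tail of $\phi-1$ at $0$, one obtains
\[
H_t u_n(x_n)\;=\;A_n\,e^{-t|\xi_n|^2}\,\bigl(1+O(e^{-c|\xi_n|^2})\bigr).
\]
Inserting $A_n=e^{t_0 n^2}$ gives $H_{t_0}u_n(x_n)=1+o(1)$ while $H_{t_1}u_n(x_n)=e^{(t_0-t_1)n^2}(1+o(1))\to\infty$. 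A Fourier-analytic envelope argument (the symbol $\phi(\cdot-x_n)e^{\pm i\xi_n\cdot(\cdot)}$ has frequency support concentrated near $\pm\xi_n$) yields, for $x$ outside the unit neighbourhood of $x_n$,
\[
|H_t u_n(x)|\;\lesssim\;A_n\,e^{-t|\xi_n|^2}\,e^{-|x-x_n|^2/(8t)}.
\]

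The three conclusions of the theorem then follow from routine Gaussian sum estimates. For (2), $|H_{t_0}g(x)|\lesssim 1+\sum_n e^{-|x-x_n|^2/(8t_0)}$, which is uniformly bounded because the $x_n$ are spaced at quadratic distances along a ray. For (3), at $x=x_n$ the diagonal term is $e^{(t_0-t_1)n^2}$ while the cross-contributions are controlled by $\sum_{m\ne n} e^{(t_0-t_1)m^2-M^2(n^2-m^2)^2/(8t_1)}$, which is $o(e^{(t_0-t_1)n^2})$ once $M$ is large; hence $H_{t_1}g(x_n)\to\infty$. For (1), disjoint supports reduce $\int|g|^2 e^{-|z-a|^2}\,\dd A$ to $\sum_n A_n^2\,e^{-(|x_n-a|-1)^2}$, whose exponent $2t_0 n^2-M^2 n^4+O(|a|n^2)$ produces a convergent series for every fixed $a$.

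The principal technical delicacy is the calibration of the spacing parameter $M$: the amplitudes $A_n$ grow exponentially in $n^2$, so a loose bound on cross-contributions would overwhelm the diagonal. The construction works because the Gaussian decay of $p_t$ acts on the quartic quantity $M^2(n^2-m^2)^2$ in the separation of centers, whereas the amplitudes grow only as $e^{O(n^2)}$; choosing $M$ to be a sufficiently large multiple of $\sqrt{t_0 t_1}$ turns every off-diagonal contribution into an exponentially small correction, and the same quartic-versus-quadratic comparison is what underlies the verification of the coherent-state hypothesis.
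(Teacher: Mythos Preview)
Your construction has the right architecture, but the central asymptotic
\[
H_t u_n(x_n)=A_n\,e^{-t|\xi_n|^2}\bigl(1+O(e^{-c|\xi_n|^2})\bigr)
\]
is false for a compactly supported envelope $\phi$, and this breaks the proof of (2). After the translation you indicate, the quantity in question is
\[
H_t u_n(x_n)=A_n\int_{\C} p_t(u)\,\phi(u)\,\cos(\xi_n\!\cdot u)\,\dd A(u),
\]
which is $A_n$ times the cosine Fourier transform of $p_t\phi$ at frequency $\xi_n$. Since $\phi\in C_c^\infty(B(0,1))$, the function $p_t\phi$ is a nonzero element of $C_c^\infty(B(0,1))$, and the Fourier transform of such a function \emph{cannot} decay like $e^{-c|\xi|^2}$: Gaussian decay of $\widehat{p_t\phi}$ would force $p_t\phi$ to extend to an entire function on $\C^2$, hence to vanish identically by unique continuation from the complement of its support. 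In your decomposition $\phi=1+(\phi-1)$, the ``main term'' $\int p_t\cos(\xi_n\!\cdot u)=e^{-t|\xi_n|^2}$ is therefore \emph{negligible} compared with the ``remainder'' $\int p_t(\phi-1)\cos(\xi_n\!\cdot u)$, which is Schwartz in $\xi_n$ (so $O(|\xi_n|^{-N})$ for every $N$) but not $O(e^{-c|\xi_n|^2})$. Consequently $H_{t_0}u_n(x_n)=e^{t_0 n^2}\cdot O(n^{-N})\to\infty$, and $H_{t_0}g$ is unbounded; the calibration $A_n=e^{t_0|\xi_n|^2}$ overshoots. The same issue invalidates the envelope bound $|H_t u_n(x)|\lesssim A_n e^{-t|\xi_n|^2}e^{-|x-x_n|^2/(8t)}$: the frequency content of $\phi(\cdot-x_n)e^{i\xi_n\cdot(\cdot)}$ is concentrated near $\xi_n$ only in the Schwartz sense, and the polynomial tail of $\hat\phi$ near the origin contributes more than $e^{-t|\xi_n|^2}$.

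The paper's proof uses exactly your wave-packet idea but with the Gaussian envelope $\phi(z)=e^{-|z|^2}$. Then $p_t\phi$ is again Gaussian and its Fourier transform is computed exactly, giving a clean damping rate $e^{-\alpha(t)|\xi|^2}$ with $\alpha(t)=t/(1+4t)$; the amplitudes are set to $A_n=e^{\alpha(t_0)|\xi_n|^2}$. The price is that the supports are no longer disjoint, so the coherent-state hypothesis and the cross-term control require separate tail estimates (handled by choosing the centers inductively so that $\sum A_n^2 e^{-|z_n|^2/4}<\infty$), rather than the disjoint-support shortcut you use for (1). Your overall strategy is correct; it is the choice of a compactly supported envelope that is incompatible with the Gaussian damping you need.
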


\begin{proof}
Let $\phi(z) = e^{-|z|^2}$. For $\xi \in \mathbb{C}$, define the modulated Gaussian $h_\xi(z) := \cos(\Im(\bar{\xi} z))\phi(z)$. A direct Gaussian computation shows that the heat transform satisfies
\[
|(H_t h_\xi)(z)| \le \beta(t) e^{-\beta(t)|z|^2} e^{-\alpha(t)|\xi|^2},
\]
where $\beta(t) = (1+4t)^{-1}$ and $\alpha(t) = t(1+4t)^{-1}$, where $\alpha(t)$ is strictly increasing.

Choose a sequence $|\xi_n| \to \infty$ and set amplitudes $A_n := e^{\alpha(t_0)|\xi_n|^2}$. Define
\[
g(z) := \sum_{n \ge 1} A_n h_{\xi_n}(z - z_n),
\]
where $z_n\in\C$ are chosen such that
\begin{align}
\label{eq:z-sep} &|z_n-z_m|\ge 10 \quad(n\ne m),\\
\label{eq:z-tail} &\sum_{n\ge1} A_n^2\,e^{-\frac14|z_n|^2}<\infty,\\
\label{eq:z-overlap-t1} &\sum_{m\ne n} A_m\,\big|(H_{t_1}h_{\xi_m})(z_n-z_m)\big|\le 1
\quad\text{for every }n.
\end{align}
Such a choice is always possible: after selecting $z_1,\dots,z_{n-1}$, one can take $|z_n|$ so large that
\eqref{eq:z-sep} holds and, using the bound on $|(H_t h_\xi)(z)|$ with $t=t_1$,
\[
A_m |(H_{t_1}h_{\xi_m})(z_n-z_m)|
\le \frac{1}{1+4t_1}\exp\!\big((\alpha(t_0)-\alpha(t_1))|\xi_m|^2\big)\,e^{-\beta_1|z_n-z_m|^2},
\]
so making $|z_n-z_m|$ sufficiently large for $m<n$ forces the finite sum in \eqref{eq:z-overlap-t1} to be $\le 1$.
At the same time, taking $|z_n|$ large enough ensures \eqref{eq:z-tail}.

\emph{Boundedness at $t_0$.} By $A_n e^{-\alpha(t_0)|\xi_n|^2}=1$,
\[
|g^{(t_0)}(z)|
\le \sum_n A_n |(H_{t_0}h_{\xi_n})(z-z_n)|
\le \beta(t_0)\sum_n e^{-\beta(t_0)|z-z_n|^2},
\]
so $g^{(t_0)}\in L^\infty(\C)$ by the bounded overlap property $\sup_{z\in\C}\sum_n e^{-\beta(t_0)|z-z_n|^2}<\infty$, which is implied by \eqref{eq:z-sep}. 

\emph{Unboundedness at $t_1$.}
Evaluating at $z=z_n$ and using translation invariance,
\[
g^{(t_1)}(z_n)
= A_n(H_{t_1}h_{\xi_n})(0)+\sum_{m\ne n}A_m(H_{t_1}h_{\xi_m})(z_n-z_m).
\]
Using \eqref{eq:z-overlap-t1},
\[
g^{(t_1)}(z_n)\ge A_n(H_{t_1}h_{\xi_n})(0)-1
=\beta(t_1)\exp \big((\alpha(t_0)-\alpha(t_1))|\xi_n|^2\big)-1\to\infty,
\]
so $g^{(t_1)}\notin L^\infty(\C)$.

\emph{Coherent-state domain.} Since $|k_a(z)|^2e^{-|z|^2}=e^{-|z-a|^2}$,
\[
\|gk_a\|_{L^2(\mu)}^2=\frac1\pi\int_\C |g(z)|^2 e^{-|z-a|^2}\,dA(z).
\]
Using separation and Cauchy--Schwarz one has $|g(z)|^2\lesssim \sum_n A_n^2 e^{-|z-z_n|^2}$, hence
\[
\|gk_a\|_{L^2(\mu)}^2
\lesssim \sum_n A_n^2 \int_\C e^{-|z-z_n|^2}e^{-|z-a|^2}\,dA(z)
\asymp \sum_n A_n^2 e^{-c|z_n-a|^2}
\lesssim e^{C|a|^2}\sum_n A_n^2 e^{-\frac14|z_n|^2}<\infty,
\]
so $gk_a\in L^2(\mu)$ for all $a\in\C$.
\end{proof}

\bibliographystyle{plain} 
\bibliography{bibliography}

\end{document}